\documentclass[11pt]{article}
\usepackage{latexsym,amsmath,amssymb,paralist,subfigure,graphicx}
\usepackage{multirow}
\usepackage{comment}
\usepackage[compress]{cite}
\usepackage{algorithm}
\usepackage{algpseudocode}
\usepackage[title]{appendix}
\usepackage{xcolor}
\usepackage{BOONDOX-calo}


\usepackage{graphicx}
\usepackage{epstopdf}
\usepackage{bm}
    \usepackage{nicefrac}
    \usepackage{longtable}
\usepackage{hyperref}
\usepackage[normalem]{ulem}

\oddsidemargin 0in
\evensidemargin 0in
\textwidth 6.4in
\topmargin -0.2in
\textheight 8.5in

\newcommand{\txd}{\text d}
\newcommand{\reff}[1]{{\rm (\ref{#1})}}
\newcommand{\xm}{\text m}

\newcommand{\R}{\mathbb{R}}            
 
\newcommand{\K}{\mathbb{K}} 

\newcommand{\ve}{\varepsilon}          
\newcommand{\bc}{\pmb{c}}           
\newcommand{\be}{\mathbf e}            
\newcommand{\bd}{\mathbf d}            
\newcommand{\bn}{\mathbf n}            


\newcommand{\calM}{\mathcal M}
\newcommand{\calW}{\mathcal W}

\newcommand{\calE}{\mathcal E}

\newcommand{\cJ}{\mathcal J}

\newcommand{\calA}{\mathcal A}
\newcommand{\calB}{\mathcal B}
\newcommand{\calP}{\mathcal P}
\newcommand{\calN}{\mathcal N}

\newcommand{\calV}{\mathcal V}

\newcommand{\caol}{\mathcal l}
\newcommand{\caok}{\mathcal k}

\newcommand{\x}{\mbox{\boldmath$x$}}
\newcommand{\y}{\mbox{\boldmath$y$}}

\newcommand{\ciptwo}[2]{\left\langle #1 , #2 \right\rangle}

\newtheorem{theorem}{Theorem}[section]
\newtheorem{lemma}[theorem]{Lemma}

\newtheorem{remark}[theorem]{Remark}

\newenvironment{proof}[1][Proof]{\begin{trivlist}
\item[\hskip \labelsep {\bfseries #1}]}{\end{trivlist}}

\newcommand{\qed}{\nobreak \ifvmode \relax \else
      \ifdim\lastskip<1.5em \hskip-\lastskip
      \hskip1.5em plus0em minus0.5em \fi \nobreak
      \vrule height0.75em width0.5em depth0.25em\fi}

\def\XXint#1#2#3{{\setbox0=\hbox{$#1{#2#3}{\int}$}
\vcenter{\hbox{$#2#3$}}\kern-.51\wd0}}

\newcommand{\td}{\tilde}

{\allowdisplaybreaks

\begin{document}

\graphicspath{{Figures/}}

\title{Second-order, Positive, and Unconditional Energy Dissipative Scheme for Modified Poisson--Nernst--Planck Equations}

\author{Jie Ding\thanks{
 School of Science, Jiangnan University,  Wuxi, Jiangsu, 214122, China. E-mail: jding@jiangnan.edu.cn.}
\and	
Shenggao Zhou\thanks{School of Mathematical Sciences, MOE-LSC, CMA-Shanghai, and Shanghai Center for Applied Mathematics,  Shanghai Jiao Tong University, Shanghai, 200240, China. E-mail: sgzhou@sjtu.edu.cn. Corresponding author. }
}
\maketitle

\begin{abstract}
First-order energy dissipative schemes in time are available in literature for the Poisson--Nernst--Planck (PNP) equations, but second-order ones are still in lack. This work proposes novel second-order discretization in time and finite volume discretization in space for modified PNP equations that incorporate effects arising from ionic steric interactions and dielectric inhomogeneity. A multislope method on unstructured meshes is proposed to reconstruct positive, accurate approximations of mobilities on faces of control volumes. Numerical analysis proves that the proposed numerical schemes are able to unconditionally ensure the existence of positive numerical solutions, \emph{original} energy dissipation, mass conservation, and preservation of steady states at discrete level. Extensive numerical simulations are conducted to demonstrate numerical accuracy and performance in preserving properties of physical significance. Applications in ion permeation through a 3D nanopore show that the modified PNP model, equipped with the proposed schemes, has promising applications in the investigation of ion selectivity and rectification. The proposed second-order discretization can be extended to design temporal second-order schemes with original energy dissipation for a type of gradient flow problems with entropy.

\bigskip
\noindent \textbf{Keywords:} Modified Poisson--Nernst--Planck Equations; Original Energy Dissipation; Second Order in Time; Positivity

\end{abstract}

\section{Introduction}
As a classical continuum mean-field model, the Poisson--Nernst--Planck (PNP) equations are often used to describe charge dynamics arising from biological/chemical applications, such as ion channels~\cite{IonChanel_HandbookCRC15},  electrochemical energy devices~\cite{BTA:PRE:04, JiLiuLiuZhou_JPS2022}, and electrokinetics fluidics~\cite{Schoch_RMP08}. In such a model, the electrostatic potential is governed by the Poisson's equation with charge sources coming from fixed charges and mobile ions. Based on the Fick's law, the electro-diffusion of ionic concentration is modeled by the Nernst--Planck (NP) equations. 

In the mean-field derivation, the PNP model treats ions as point charges, ignoring the steric effects, dielectric inhomogeneity, etc. Such ignorance may lead to unphysical crowding of ions and incorrect dynamics near highly charged surfaces. To overcome these limitations, many modified PNP models have been proposed to incorporate steric effects and dielectric effects. For instance, ionic steric effects can be taken into account by using the density functional theory~\cite{Rosenfeld_1997} or the Lennard-Jones interaction potential \cite{HyonLiuBob_CMS10} for the hard spheres. Alternatively, the entropy of solvent molecules can be added to the electrostatic free energy~\cite{Li:N:2009, JiZhou_2019}, leading to a well-known class of size-modified PNP equations~\cite{BazantSteric_PRE07, BZLu_BiophyJ11}. In addition, the effect of dielectric inhomogeneity can be incorporated by considering the Born solvation energy~\cite{LiuLuPRE_2017, LiuJiXu_SIAP18}, which describes the interaction between an ion and surrounding hydration solvent molecules.  There are dielectric depletion forces for solvated ions, as ions move from a high dielectric region to a low dielectric region. It has been shown that the Born model can accurately describe the solvation of ions~\cite{DZ2020}.

In this work, we consider modified PNP models with ionic steric effects and Born solvation energy accounting for dielectric inhomogeneity.
The PNP-type models have many properties of physical significance, e.g., positivity of ionic concentration, mass conservation, and energy dissipation with certain boundary conditions. Considerable efforts have been devoted to the development of numerical schemes that can maintain such properties at discrete level, ranging from finite difference schemes to finite element schemes~\cite{ProhlSchmuck09, LW14, MXL16, LW17, GaoHe_JSC17, GaoSun_JSC18, LiuMaimaiti_2022, DingWangZhou_JCP2023}. Based on a gradient-flow formulation, implicit finite difference/finite element schemes that respect energy dissipation are developed in the works~\cite{ShenXu_NM21, LiuWangWiseYueZhou_2021, QianWangZhou_JCP21}, in which the existence of positive numerical solutions is proved using the singularity of the logarithmic function at zero.  Based on the Slotboom variables, another type of numerical scheme is developed in the works~\cite{HuHuang_NM20, DingWangZhou_JCP2020, LiuMaimaiti2021}, in which numerical positivity is proved by the discrete maximum principle. Finite volume schemes are also proposed for the PNP-type equations, called drift-diffusion systems in the literature of semiconductor physics~\cite{ChainaisLiuPeng_M2AN2003, ChainaisPeng_M3AS2004, ChainaisFilbet_IMA2007}. For instance, fully-implicit finite volume schemes with Scharfetter-Gummel fluxes are proposed for drift-diffusion systems in the works~\cite{Chatard2012, Chatard2014}, in which numerical positivity and decay of the discrete entropy are analyzed as well. To address large convection, property-preserving finite difference/finite volume schemes using upwind flux with slope limiters are developed for the general nonlinear diffusion equations~\cite{ChatardFilbet_SISC2012, CarrilloChertockHuang_CICP15, BailoCarrilloHu_CMS2020}.

Second-order discretization in time for the PNP-type equations has been developed in the abovementioned literature. However, numerical analysis of discrete energy dissipation, especially for the \emph{original} discrete energy, is very challenging for such second-order schemes. To the best of our knowledge, the modified second-order Crank-Nicolson scheme of the form $\frac{F(u^{n+1})-F(u^{n})}{u^{n+1}-u^{n}}$ proposed in the work~\cite{LiuWang_Sub21} is the only scheme available in literature that can maintain original discrete energy dissipation. Nonetheless, it is worth noting that the denominator of the quotient term becomes problematic when the system approaches the steady state. The property of preservation of steady states is crucial to the simulations of charged systems. Furthermore, the quotient-form treatment of the ionic entropy may not guarantee the existence of positive numerical solutions. A nonlinear artificial regularization term has been added in order to prove the existence of positive numerical solutions~\cite{LiuWang_Sub21}. 

This work proposes a novel second-order discretization in time for modified PNP equations with effects arising from steric interactions and dielectric inhomogeneity. Such discretization enjoys several advantages. First, it is second-order discretization in time with provable original energy dissipation. Second, the proposed numerical schemes preserve steady states. Third, the existence of positive numerical solutions can be established without adding a nonlinear artificial regularization term. In addition, a multislope method on unstructured meshes is proposed in finite volume discretization to reconstruct accurate approximations of mobilities on faces of control volumes.  The mobility reconstruction is proved to be positivity preserving. Numerical analysis further demonstrates that the proposed numerical schemes are able to unconditionally ensure mass conservation, original energy dissipation, and the existence of positive numerical solutions. Our numerical analysis also reveals that the proposed schemes are able to guarantee the existence of positive solvent concentration, which is desirable but challenging for the widely used size-modified PNP equations~\cite{BazantSteric_PRE07, BZLu_BiophyJ11}. Numerical simulations are performed to  demonstrate numerical accuracy and performance in preserving the properties of physical significance. Applications in ion permeation through a 3D nanopore show that the modified PNP model, equipped with the proposed schemes, has promising applications in the study of ion selection and rectification. We highlight that the proposed second-order discretization in time can be extended to design second-order schemes with original energy dissipation for a large type of gradient flow problems with entropy of the $c \log c$ form.

This paper is organized as follows. In section 2, we introduce modified PNP equations and their physical properties. In section 3, we propose a first-order scheme (Scheme I) and a second-order scheme (Scheme II) for the modified PNP equations.  In section 4, we perform numerical analysis of the proposed schemes. In section 5, we present numerical simulation results. Finally, concluding remarks are given in section 6.

\section{Model}
We consider a system occupying a bounded domain $\Omega$ with a smooth boundary $\partial \Omega$. The charged system contains fixed charge distribution $\rho^f\in L^{\infty}(\Omega)$ and $M$ species of mobile ions. The boundary is assumed to be a disjoint union of two types: Dirichlet-type $\Gamma_{\rm D}$ and Neumann-type $\Gamma_{\rm N}$, with $\Gamma_{\rm D} \bigcup\Gamma_{\rm N}=\partial\Omega$. The electrostatic potential $\psi$ solves  a boundary-value problem of the Poisson's equation:
\begin{equation} \label{Poisson}
 \left\{
 \begin{aligned}
 -\nabla\cdot\ve_0\ve(\x) \nabla \psi&=\rho \quad & &\mbox{in }~ \Omega,\\
 \quad \ve_0\ve(\x) \frac{\partial \psi}{\partial  \textbf{n}} &=\psi^{\rm N} \quad & &\mbox{on }~ \Gamma_{\rm N},\\
\psi&=\psi^{\rm D} \quad & & \mbox{on }~ \Gamma_{\rm D},\\
 \end{aligned}
 \right.
 \end{equation}
where $\ve_0$ is the vacuum permittivity, $\ve(\x)$ is the position-dependent dielectric coefficient, $\psi^{\rm N}$ is the surface charge density, $\psi^{\rm D}$ is the prescribed electrostatic potential,  and $\rho$ is the total charge density given by
$$\rho=\sum_{{\caol}=1}^M q^{\caol} c^{\caol} + \rho^f.$$ It is assumed that the dielectric coefficient satisfies $0< \ve_{\min}\leq \ve(\x) \leq \ve_{\max}$ with lower and upper bounds $\ve_{\min}$ and $\ve_{\max}$.
Here, $c^{\caol}(\x,t)$ denotes the $\caol$-th ionic concentration, and $q^{\caol}=z^{\caol}e$ with $e$ being the elementary charge and $z^{\caol}$ being the valence of the $\caol$-th ionic species.
Let $c=(c^1,c^2,\cdots,c^M)$.  The mean-field free-energy functional of the charged system is given by
\begin{equation}\label{FreeEn1}
\begin{aligned}
F[c]=&\int_{\Omega}\frac{1}{2}\rho\psi +\beta^{-1}\sum_{{\caol}=0}^Mc^{\caol}\bigg[\log\big(a^3_{\caol} c^{\caol}\big)-1\bigg]+\sum_{{\caol}=1}^M\frac{(q^{\caol})^2}{8\pi a_{\caol}\ve_0}\left(\frac{1}{\ve(\x)}-1 \right)c^\caol \,dV\\
&\qquad+\frac{1}{2}\int_{\Gamma_{\rm N}} \psi^{\rm N} \psi \,dS -\frac{1}{2}\int_{\Gamma_{\rm D}} \ve_0\ve\frac{\partial\psi}{\partial \bn} \psi^{\rm D} \,dS,\\
\end{aligned}
\end{equation}
in which the first term is the electrostatic energy, the second term represents entropic contribution of both ions and solvent, the third term describes the Born solvation energy of ions, and the rest of terms account for boundary contributions~\cite{LiuQiaoLu_SIAP18}. Here, $\beta$ is the inverse thermal energy and  $c^0$ is the solvent concentration defined by
\[
c^0(\x,t)=a_0^{-3}\left(1-\sum_{{\caok}=1}^M a^3_{\caok}c^{\caok}(\x,t)\right),
\]
where $a_{\caok}$ and $a_{0}$  represent  the linear sizes of the $\caok$-th ionic species and solvent, respectively. 

Taking the first variation of $F[c]$ with respect to $c^{\caol}$, one obtains the chemical potential of the ${\caol}$th ionic species:
\[
\begin{aligned}
\mu^{\caol}=&\left[q^{\caol}\psi+\beta^{-1}\log (a^3_{\caol}c^{\caol}) -\beta^{-1}\frac{a^{3}_{\caol}}{a^3_0}\log \left(1-\sum_{{\caok}=1}^Ma^3_{\caok}c^{\caok}\right)+\frac{(q^{\caol})^2}{8\pi a_{\caol}\ve_0}\left(\frac{1}{\ve(\x)}-1 \right)\right].
\end{aligned}
\]
Combing with the conservation law, one obtains modified Nernst--Planck equations with ionic steric effects and Born solvation effects:
\begin{equation}
\begin{aligned}
\partial_t c^{\caol}
=\nabla\cdot \left\{D^{\caol} c^{\caol}\nabla\left[\beta q^{\caol}\psi+\log (a^3_{\caol}c^{\caol}) -\frac{a^{3}_{\caol}}{a^3_0}\log \left(1-\sum_{{\caok}=1}^Ma^3_{\caok}c^{\caok}\right)+\frac{(q^{\caol})^2\beta}{8\pi a_{\caol}\ve_0}\left(\frac{1}{\ve(\x)}-1 \right)\right] \right\},
\end{aligned}
\end{equation}
where $D^\caol$ is the diffusion coefficient of the $\caol$-th ionic species.

After rescaling, one obtains nondimensionalized modified Poisson--Nernst--Planck equations
\begin{equation}
\left\{
\begin{aligned}\label{G_PNP}
&-\nabla\cdot\kappa\ve(\x)\nabla \psi=\sum_{\caol=1}^M z^{\caol} c^{\caol}+\rho^f,\qquad~&&(\x,t)\in \Omega\times[0,T], \\
&\partial_t c^{\caol}=\gamma_{\caol}\nabla\cdot\left( c^{\caol}\nabla \mu^{\caol} \right),~&&\caol=1,2,\cdots,M,\\
&\mu^{\caol} =z^{\caol}\psi+\log (a^3_\caol c^\caol) -\frac{a^{3}_\caol}{a^3_0}\log \left(1-\sum_{\caok=1}^Ma^3_\caok c^\caok\right)+\frac{\chi(z^\caol)^2}{a_\caol}\left(\frac{1}{\ve(\x)}-1 \right),~&&\caol=1,2,\cdots,M,
\end{aligned}
\right.
\end{equation}
where $\gamma_{\caol}$, $\chi$, and $\kappa$ are three positive nondimensionalized coefficients, and $c^{\caol}(\x,t)$, $\psi(\x,t)$, and $\mu^{\caol}(\x,t)$, not relabeled, are nondimensionalized ionic concentrations, electrostatic potential, and chemical potentials, respectively. The corresponding nondimensionalized boundary conditions are prescribed as
\begin{equation}\label{BCs}
\left\{
\begin{aligned}
 &\kappa\ve(\x)\frac{\partial \psi}{\partial  \textbf{n}} =\psi^{\rm N} \quad & &\mbox{on }~ \Gamma_{\rm N},\\
 &\psi=\psi^{\rm D} \quad & & \mbox{on }~ \Gamma_{\rm D},\\
 &c^\caol\nabla\mu^\caol\cdot\bn=0&&\mbox{on }~ \partial\Omega.\\
\end{aligned}
\right.
\end{equation}
The corresponding nondimensionalized free energy is rewritten as
\begin{equation}\label{FEnergy}
\begin{aligned}
F=&\int_{\Omega} \left\{\frac{1}{2}\rho\psi+\sum_{\caol=0}^Mc^\caol\left(\log(a^3_\caol c^\caol)-1\right) +\sum_{\caol=1}^M \frac{\chi(z^\caol)^2}{a_\caol}\left(\frac{1}{\ve(\x)}-1 \right)c^\caol\right\} \,dV\\
&+\frac{1}{2}\int_{\Gamma_{\rm N}}  \psi^{\rm N}\psi \,dS -\frac{1}{2}\int_{\Gamma_{\rm D}} \kappa\ve(\x)\frac{\partial\psi}{\partial \bn} \psi^{\rm D} \,dS.\\
\end{aligned}
\end{equation}
The modified PNP equations \reff{G_PNP}, along with boundary conditions\reff{BCs}, have the following properties of physical significance:
\begin{itemize}
\item  Mass conservation: the total concentration of each species remains constant over time, i.e.,\\
\[
\int_{\Omega} c^{\caol}(\x,t)d\x=\int_{\Omega} c^{\caol}(\x,0)d\x,~~\caol=1,2,\cdots,M.
\]
\item Positivity of ionic concentrations: the ionic concentrations are positive, i.e.,\\
\[
c^{\caol}(\x,t)>0,~~\mbox{if}~ c^{\caol}(\x,0)>0,~~\mbox{for}~\caol=1,\cdots,M,~\x\in\Omega,~ t>0.
\]
\item Free-energy dissipation: assume that the boundary data $\psi^{\rm N}$ and $\psi^{\rm D}$ are independent of time, the free energy decays over time\cite{DingWangZhou_JCP2020, LiuMaimaiti_2022, HuHuang_NM20}, i.e.,\\
\begin{equation}\label{dF/dt}
\begin{aligned}
\frac{dF}{dt}=-\sum_{\caol=1}^M \gamma_{\caol}\int_{\Omega} c^{\caol} |\nabla\mu^{\caol}|^2d\x \leq 0,~~t>0.
\end{aligned}
\end{equation}
\end{itemize}
%
\section{Numerical Schemes}
\subsection{Notations}
\begin{figure}[htbp]
\centering
\subfigure[2D mesh]{\includegraphics[scale=.65]{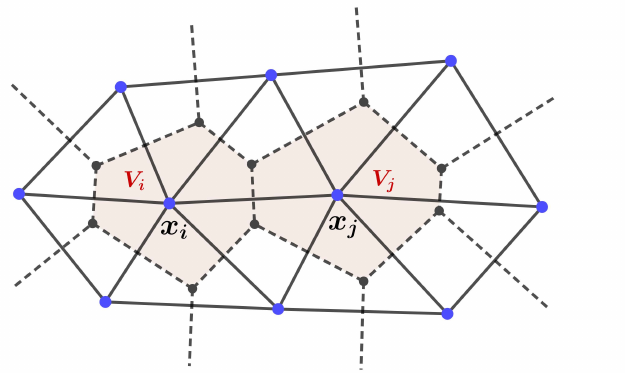}}
\subfigure[3D mesh]{\includegraphics[scale=.65]{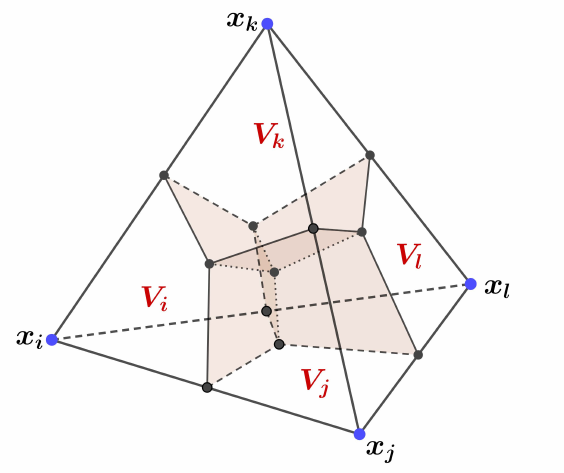}}
\caption{Delaunay mesh with blue solid vertices ($x_i$, $x_j, \cdots$) and dual Voronoi control volumes ($V_i$, $V_j, \cdots$)  with black solid vertices. (a) 2D case; (b) 3D case.}
\label{f:CVs}
\end{figure}

 The computational domain $\Omega\subset\R^d (d=2, 3)$ is assumed to be a polygonal domain or polyhedra domain. To introduce our finite volume discretization, we adopt classical mesh notations in literature~\cite{Chatard2014, Chatard2012, FVM_Herbin_Book}.  The mesh $\calM=(\calV, \calE, \calP)$ covering $\Omega$ consists of a family of open polygonal control volumes $\calV:=\{V_i, i=1,2,\cdots,N \}$, a family of $1$-dimensional edges $\calE$ in 2-D or faces in 3-D:
 \[
 \begin{aligned}
 &\calE:=\calE_{ext}\bigcup\calE_{int},\\
 &\calE_{int}:=\{\sigma \subset\R^{d-1}:\sigma=\partial V_i \cap\partial V_j\},\\
 &\calE_{ext}:=\calE^D_{ext}\cup \calE^N_{ext}, ~\mbox{with} \left\{
 \begin{aligned}
 &\calE^{\rm D}_{ext}:=\{\sigma \subset\R^{d-1}:\sigma=\partial V_i \cap \Gamma_{\rm D}\},\\
  &\calE^{\rm N}_{ext}:=\{\sigma \subset\R^{d-1}:\sigma=\partial V_i \cap \Gamma_{\rm N}\},\\
 \end{aligned}
 \right.
 \end{aligned}
 \]
 and a family of points $\calP=\{\x_i, i=1,2,\cdots,N \}$; cf.~Fig.~\ref{f:CVs}. The control volumes of the Voronoi meshes are defined by
 \[
V_i=\{ \y\in\Omega~\big| ~\txd(\x_i,\y)<\txd(\x_j,\y),\forall \x_j\in\calP,i\neq j\},~~i=1,2,\cdots,N,
 \]
 where $\txd(\cdot,\cdot)$ denotes the Euclidean distance in $\R^d$ and $N={\rm Card}(\calV)$. For a control volume $V_i\in\calV$, $\calE_i$ denotes the set of its edges, $\calE_{i,int}$ denotes the set of its interior edges, $\calE^{\rm D}_{i,ext}$ denotes the set of its edges included in $\Gamma_{\rm D}$, $\calE^{\rm N}_{i,ext}$ denotes the set of its edges included in $\Gamma_{\rm N}$. 
 The size of the mesh is defined by
 \[
h=\sup\{\text{diam}(V_i),~V_i\in\calV\},
 \]
 with $\text{diam}(V_i)=\underset{\x,\y\in V_i}{\sup} |\x-\y|$. 
Define the following three sets of indices for control volumes:
 \[
 \begin{aligned}
 &\calN_1=\{i~|~ \partial V_i\cap\calE^{\rm D}_{ext}\neq \emptyset \},\\
 &\calN_2=\{i~|~ \partial V_i\cap\calE^{\rm N}_{ext}\neq \emptyset \},\\
 &\calN_3=\{i~|~ \partial V_i\cap\calE_{ext}\neq \emptyset \}.\\
 \end{aligned}
 \]
 Define $\calW_i$ as the set of vertices connected to $\x_i$, i.e.,
\[
\calW_i=\left\{\x_j\in\calP~|~~\partial V_j\cap\partial V_i\neq\emptyset,~j=1,2,\cdots,N  \right\}.
\]
 Denote a vector of approximation values as 
\[
u_{\calV}=(u_1,u_2,\cdots,u_N)^t\in\R^N.
\]
Here the superscript $t$ represents the transpose and $u_k$ is the approximate average defined by
\[
u_k=\frac{1}{\xm(V_k)}\int_{V_k}u(\x)d\x, 
\]
where $\xm(\cdot)$ denotes the measure in $\R^d$ or $\R^{d-1}$.
 For two adjacent control volumes, e.g., $V_i$ and $V_j$, the line segment $\x_i\x_j$ is orthogonal to the common edge $\sigma=\partial V_i\cap\partial V_j$. For all $\sigma\in \calE$,
 \[
 d_{\sigma}=\left\{
 \begin{aligned}
 &\txd(\x_i,\x_j)~~&&\mbox{for}~~\sigma=\partial V_i\cap\partial V_j\in \calE_{int},\\
 &\txd(\x_i,\sigma)~~&&\mbox{for}~~\sigma\in \calE_{ext}\cap\calE_i.
 \end{aligned}
 \right.
 \]
We introduce the transmissibility coefficient of the edge $\sigma$, defined by 
 \[
 \tau_{\sigma}=\frac{\xm(\sigma)}{d_{\sigma}},~ \forall\sigma\in\calE. 
 \]
 For $\sigma\in\calE_i$, $\bn_{i,\sigma}$ is the unit vector normal to $\sigma$ outward to $V_i$. We assume that the mesh satisfies the following regularity constraint: there exists a uniform constant $\xi>0$, such that
 \[
 \txd(\x_i,\sigma)\geq\xi \text{diam}(V_i),~~\forall V_i\in\calV,~\forall\sigma\in\calE_i.
 \]
For all $V_i\in\calV$ and all $\sigma\in\calE_i$, the difference operator is defined as
\[
(D u)_{i,\sigma}=\left\{
\begin{aligned}
&u_j-u_i~~&&\mbox{if}~~\sigma=\partial V_i\cap\partial V_j\in\calE_{i,int},\\
&u^{\rm D}_{\sigma}-u_i~~&&\mbox{if}~~\sigma\in\calE^{\rm D}_{i,ext},\\
&u^{\rm N}_{\sigma}\cdot d_\sigma~~&&\mbox{if}~~\sigma\in\calE^{\rm N}_{i,ext},
\end{aligned}
\right.
\]
where $u^{\rm D}_{\sigma}$ and $u^{\rm N}_{\sigma}$ are boundary data on the $\Gamma_{\rm D}$ and $\Gamma_{\rm N}$, respectively. Define $D_{\sigma} u=|u_j-u_i|$ for $\sigma=\partial V_i\cap\partial V_j\in \calE_{int}$. Denote by $\overrightarrow{\x_i\x_j}$ a vector that points from the vertex $\x_i$ to the vertex $\x_j$. 	 
Denote by $\ciptwo{\overrightarrow{\x_i\x_k}}{\overrightarrow{\x_j\x_k}}$ the conventional inner product of two vectors $\overrightarrow{\x_i\x_k}$ and $\overrightarrow{\x_j\x_k}$.

\subsection{First-order Temporal Discretization: Scheme I}
We now consider a vertex-centered finite volume scheme for the modified PNP equations \reff{G_PNP}.
With the uniform time step size $\Delta t$ and $t^n=n\Delta t$, we define the approximate solutions as $u^n_{\calV}=(u^n_i)_{V_i\in\calV}$ for $u=\psi,~c^\caol,~\mu^\caol~(\caol=1, 2, \cdots, M)$.  
Integrating the modified PNP equations \reff{G_PNP} on each control volume and applying the divergence theorem, we obtain a vertex-centered finite volume scheme by semi-implicit discretization:
\begin{equation}\label{DisPNP}
\left\{
\begin{aligned}
&-\kappa \sum_{\sigma\in\calE_i}\tau_{\sigma}\ve_{\sigma}D\psi^{n+1}_{i,\sigma}=\xm(V_i)\left(\sum_{\caol=1}^M z^{\caol}c^{\caol,n+1}_i+\rho^f_i\right), ~~\forall V_i\in \calV,\\
&\xm(V_i)\frac{c^{\caol,n+1}_i-c^{\caol,n}_i}{\Delta t}=\gamma_\caol\sum_{\sigma\in\calE_i}\tau_{\sigma}\td c^{\caol,n}_{\sigma}D\mu^{\caol,n+1}_{i,\sigma},~~\caol=1,2,\cdots,M,\\
&\mu^{\caol,n+1}_{i}=z^{\caol}\psi^{n+1}_{i}+\log (a^3_\caol c^{\caol,n+1}_{i}) -\frac{a^{3}_\caol}{a^3_0}\log \left(1-\sum_{\caok=1}^Ma^3_\caok c^{\caok,n+1}_{i}\right)+\frac{\chi(z^\caol)^2}{a_\caol}\left(\frac{1}{\ve_{i}}-1 \right),
\end{aligned}
\right.
\end{equation}
where 
%
 the discrete mobility $\td c^{\caol,n}_{\sigma}$ on the edge $\sigma=\partial V_i\cap \partial V_j$ is approximated by  
 \begin{equation}\label{Mob}
 \td c^{\caol,n}_{\sigma}=\left\{
 \begin{aligned}
 &c^{\caol,n}_{i\rightarrow j},~~D(\mu^{\caol,n})_{i,\sigma}<0,\\
  &c^{\caol,n}_{j\rightarrow i},~~D(\mu^{\caol,n})_{i,\sigma}\geq 0,\\
 \end{aligned}
 \right.
 \end{equation}
 with $c^{\caol,n}_{i\rightarrow j}$ and $c^{\caol,n}_{j\rightarrow i}$ being specified in Section \ref{ss:des}. We refer to the above scheme \reff{DisPNP} as ``{\bf Scheme I}".

The initial and boundary conditions are discretized as follows:
\begin{equation}\label{Bcs}
\begin{aligned}
&c^{\caol,0}_i=\frac{1}{\xm(V_i)}\int_{V_i} c^{\caol,0}(\x)d\x,~~&&\caol=1,2,\cdots,M,~~\forall V_i\in\calV,\\
&\td c^{\caol,n}_{\sigma}(D\mu^{\caol,n+1})_{i,\sigma}=0,~~&&\forall ~i\in\calN_3,~~\sigma\in\calE_{i,ext}=\calE^{\rm D}_{i,ext}\cup \calE^{\rm N}_{i,ext},\\
&\psi^{\rm D}_{\sigma}=\frac{1}{\xm(\sigma)}\int_{\sigma} \psi^{\rm D}(\gamma)d\gamma,~~&& \forall~ i\in\calN_1,~~\sigma\in\calE^{\rm D}_{i,ext},
\\
&\psi^{\rm N}_{\sigma}=\kappa\ve_{\sigma}(D\psi^n)_{i,\sigma}/d_{\sigma},~~&&\forall ~i\in\calN_2,~~\sigma\in\calE^{\rm N}_{i,ext}.
\end{aligned}
\end{equation}

\subsection{Second-order Temporal Discretization: Scheme II}
In order to achieve second-order temporal accuracy, we propose a novel Crank-Nicolson type of finite volume scheme for the modified PNP equations \reff{G_PNP} as follows: 
\begin{equation}\label{DisPNP2}
\left\{
\begin{aligned}
&-\kappa \sum_{\sigma\in\calE_i}\tau_{\sigma}\ve_{\sigma}D\psi^{n+1}_{i,\sigma}=\xm(V_i)\left(\sum_{\caol=1}^M z^\caol c^{\caol,n+1}_i+\rho^f_i\right), ~~\forall V_i\in \calV,\\
&\xm(V_i)\frac{c^{\caol,n+1}_i-c^{\caol,n}_i}{\Delta t}=\gamma_\caol\sum_{\sigma\in\calE_i}\tau_{\sigma}\check{c}^{\caol,n+\frac{1}{2}}_{\sigma}D\mu^{\caol,n+\frac{1}{2}}_{i,\sigma},~~\caol=1,2,\cdots,M,\\
&\mu^{\caol,n+\frac{1}{2}}_i=\mu^{\caol,n+\frac{1}{2}}_{e1,i}+\mu^{\caol,n+\frac{1}{2}}_{e2,i}+\frac{z^\caol}{2}(\psi^n_i+\psi^{n+1}_i)+\frac{\chi(z^\caol)^2}{a_\caol}\left(\frac{1}{\ve_i}-1\right),\\
\end{aligned}
\right.
\end{equation}
where 
$\mu^{\caol,n+\frac{1}{2}}_{e1}$ and $\mu^{\caol,n+\frac{1}{2}}_{e2}$ are given by
\begin{equation} 
\begin{aligned}
\mu^{\caol,n+\frac{1}{2}}_{e1}=&\log(a^3_\caol c^{\caol,n+1})-\frac{1}{2c^{\caol,n+1}}(c^{\caol,n+1}-c^{\caol,n})-\frac{1}{6(c^{\caol,n+1})^2}(c^{\caol,n+1}-c^{\caol,n})^2,\\
\mu^{\caol,n+\frac{1}{2}}_{e2}=&-\frac{a^3_\caol}{a^3_0}\log\left(1-\sum_{\caok=1}^Ma^3_\caok c^{\caok,n+1}\right)-\frac{a^3_\caol\sum_{\caok=1}^M a^3_\caok(c^{\caok,n+1}-c^{\caok,n})}{2a^3_0(1-\sum_{\caok=1}^Ma^3_\caok c^{\caok,n+1})}\\
&+\frac{a^3_\caol\left[\sum_{\caok=1}^M a^3_\caok(c^{\caok,n+1}-c^{\caok,n})\right]^2}{6a^3_0(1-\sum_{\caok=1}^Ma^3_\caok c^{\caok,n+1})^2},
\end{aligned}
\end{equation}
respectively, and
\begin{equation*}
\check{c}^{\caol,n+\frac{1}{2}}_{\sigma}=\left\{
 \begin{aligned}
 &\hat{c}^{\caol,n+\frac{1}{2}}_{i\rightarrow j},~~D(\mu^{\caol,n})_{i,\sigma}<0,\\
  &\hat{c}^{\caol,n+\frac{1}{2}}_{j\rightarrow i},~~D(\mu^{\caol,n})_{i,\sigma} \geq 0,\\
 \end{aligned}
 \right.
 \end{equation*}
 with $\hat{c}^{\caol,n+\frac12}_{i\rightarrow j}$ and $\hat{c}^{\caol,n+\frac12}_{j\rightarrow i}$ further being specified in Section~\ref{ss:des} using extrapolation data~\cite{GuShen_JCP2020}
\[
\hat{c}_i^{\caol,n+\frac{1}{2}}=\left\{
\begin{aligned}
&\frac{1}{2}(3  c^{\caol,n}_{i}-  c^{\caol,n-1}_{i}), &~~\mbox{if}~3 c^{\caol,n}_{i}>  c^{\caol,n-1}_{i},\\
&(c^{\caol,n}_{i})^{\frac{3}{2}}/(c^{\caol,n-1}_{i})^{\frac{1}{2}}, &~~\mbox{if}~3 c^{\caol,n}_{i}\leq c^{\caol,n-1}_{i}.\\
\end{aligned}
\right.
\]
 The initial and boundary conditions are discretized the same as in \reff{Bcs}. In the following, we refer to this scheme as ``{\bf Scheme II}". 

\subsection{Multislope Method}\label{ss:des}
We develop a multislope method on unstructured meshes to determine mobility on edges (faces) of control volumes; cf.~\reff{Mob}. Multislope methods have been developed in cell-centered finite volume discretization~\cite{ClainClauzon_NM2010, TouzeMurroneGuillard_JCP15, JiangYounis_JCP17}. In such methods,  backward scalar slope $s^-_{ij}$ and forward scalar slope $s^+_{ij}$ are computed, and the slope limiter function $\phi(s^+_{ij},s^{-}_{ij})$ is introduced to prevent spurious oscillations. We now present the computation of reconstructed mobilities at the midpoint $\x^m_{ij}$ of two vertices $\x_i$ and $\x_j$ in 2D and 3D cases.

The first step is to locate an extension point $\x^{\rm ext}_{ij}$ along the ray $\overrightarrow{\x_j\x_i}$; cf.~Fig.~\ref{f:cv2D}. 
Define $\x_{k_1}\in\calW_i$, such that
\[
\cos(\overrightarrow{\x_{k_1}\x_i},\overrightarrow{\x_i\x_j})=\underset{\x_q\in \calW_i}{\max}\cos(\overrightarrow{\x_q \x_i},\overrightarrow{\x_i \x_j}),
\]
where $\cos(\overrightarrow{\x_q \x_i},\overrightarrow{\x_i \x_j})$ denotes the cosine of the angle between $\overrightarrow{\x_q \x_i}$ and $\overrightarrow{\x_i \x_j}$. Let $\x^{\rm ext}_{ij}=\frac{(2^r+1)\x_i-\x_j}{2^r}$ for $r=0$. Next, loop through triangles (or tetrahedrons) that share $\x_{k_1} \x_i$ as an edge, and find the one that contains the extension point $\x^{\rm ext}_{ij}$. If fails, then let $r=1$, $r=2$, and so on, until it finds the triangle (or tetrahedron)  that contains $\x^{\rm ext}_{ij}$. Such an algorithm, which can be shown to stop in finite steps, gives the location of $\x^{\rm ext}_{ij}$, as well as vertices ($\x_i \x_{k_1} \x_{k_2}$ or $\x_i \x_{k_1} \x_{k_2} \x_{k_3}$) of the triangle (or tetrahedron) it belongs to. Analogously, the extension point $\x^{\rm ext}_{ji}$ along the ray $\overrightarrow{\x_i\x_j}$ can be treated in the same manner. 

\begin{figure}[H]
\centering
\subfigure[2D case]{\includegraphics[scale=.5]{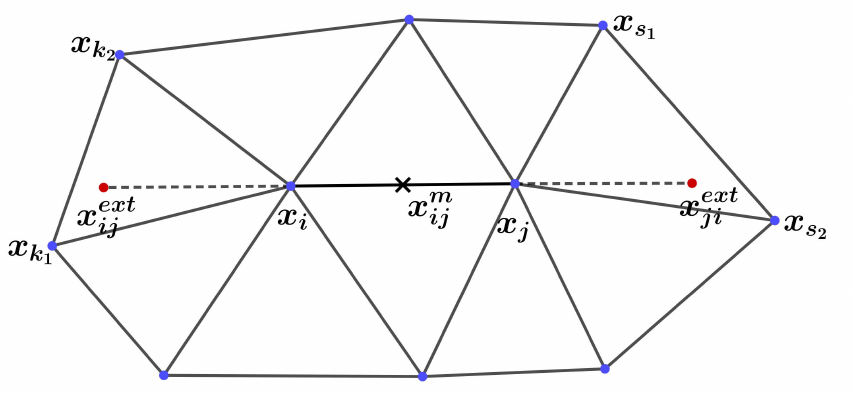}}
\subfigure[3D case]{\includegraphics[scale=.95]{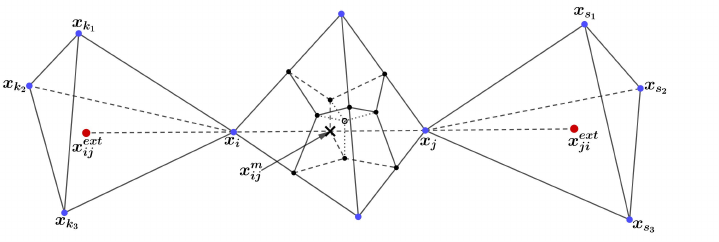}}
\caption{Diagram showing extension points $\x^{\rm ext}_{ij}$ and $\x^{ \rm ext}_{ji}$ along the line $\x_i\x_j$, as well as the triangles $\x_i \x_{k_1} \x_{k_2}$ (tetrahedrons $\x_i \x_{k_1} \x_{k_2} \x_{k_3}$) they belong to. The point $\x^m_{ij}$ is the middle point of the line segment connecting $\x_i$ and $\x_j$, which are center nodes of control volumes $V_i$ and $V_j$. (a) 2D case; (b) 3D case.
}
\label{f:cv2D}
\end{figure}

With the found point $\x^{\rm ext}_{ij}$ and the triangle (or tetrahedron) it belongs to, we obtain an approximate value $u^{\rm ext}_{ij}$ by interpolating on vertices of the triangle (or tetrahedron). Then the mobility $u_{i\rightarrow j}$ at the midpoint $\x^m_{ij}$ can be  reconstructed as

%
%
\begin{equation}\label{uij-construct}
\begin{aligned}
&u_{i\rightarrow j}=u_i+\frac{|\x_i\x_j|}{2}\phi(s^+_{ij},s^-_{ij}),\\
\end{aligned}
\end{equation}
where $\phi(s^+_{ij},s^-_{ij})$ is the flux limiter with the backward slope $s^-_{ij}$ and forward slope $s^+_{ij}$ given by
\[
s^+_{ij}=\frac{u_j-u_i}{|\x_i\x_j|},~~
s^-_{ij}=\frac{u_i-u^{\rm ext}_{ij}}{|\x_i\x^{\rm ext}_{ij}|}.
\]
Analogously, the mobility $u_{j\rightarrow i}$ at the middle point $\x^m_{ij}$ can be reconstructed as
\[
\begin{aligned}
&u_{j\rightarrow i}=u_j+\frac{|\x_i\x_j|}{2}\phi(s^+_{ji},s^-_{ji}),\\
\end{aligned}
\]
where the backward slope $s^-_{ji}$ and forward slope $s^+_{ji}$ are given by
\[
s^+_{ji}=\frac{u_i-u_j}{|\x_i\x_j|},~~
s^-_{ji}=\frac{u_j-u^{\rm ext}_{ji}}{|\x^{\rm ext}_{ji}\x_j|}.
\]
In addition, we take~\cite{VanLeer_1977, VanLeer_1979, ChatardFilbet_SISC2012}
\[
\phi(s^+,s^-)=\mbox{minmod}\left(\beta s^+,\beta s^-,\frac{s^++s^-}{2} \right),
\]
where the parameter $\beta\in[1,2]$ and the Minmod flux limiter is given by
\[
\mbox{minmod}(q_1,q_2,q_3)=\left\{
\begin{aligned}
&k\cdot\underset{1\leq r\leq 3}{\min} |q_r|,~~&&\mbox{if}~~{\rm sign}(q_1)={\rm sign}(q_2)={\rm sign}(q_3)=k,\\
&0,~~&&\mbox{otherwise}.
\end{aligned}
\right.
\]
The mobility on edges (faces) of control volumes determined by the developed multislope method is guaranteed to be positive, as long as the reconstruction mobility data is positive.  
\begin{lemma}\label{t:mobility}
Assume that $u_i>0$ and $u_j>0$, for $ i, j =1, \cdots, N$. The reconstructed mobilities at edges (faces) of control volumes are positive as well, i.e.,
\[
u_{i\rightarrow j}>0 \mbox{~and~} u_{j\rightarrow i}>0.
\]
\end{lemma}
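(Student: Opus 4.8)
The plan is to prove positivity of $u_{i\rightarrow j}$ directly from the reconstruction formula \reff{uij-construct}, and then obtain the bound for $u_{j\rightarrow i}$ by the same argument with the roles of $i$ and $j$ exchanged. The whole argument hinges on one observation: the only quantity entering the reconstruction whose sign I cannot control a priori is the extrapolated value $u^{\rm ext}_{ij}$, and hence the backward slope $s^-_{ij}$; by contrast, the forward slope $s^+_{ij}=(u_j-u_i)/|\x_i\x_j|$ depends only on the two positive nodal values $u_i$ and $u_j$. The key is therefore to bound the (possibly negative) limited slope $\phi(s^+_{ij},s^-_{ij})$ in terms of $s^+_{ij}$ alone, exploiting that the minmod function returns the argument of smallest magnitude together with the assumption $\beta\le 2$.

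First I would split into cases according to the sign of $\phi(s^+_{ij},s^-_{ij})$. If $\phi\ge 0$, then \reff{uij-construct} gives $u_{i\rightarrow j}=u_i+\tfrac{|\x_i\x_j|}{2}\phi\ge u_i>0$ immediately, so nothing further is required. The only nontrivial case is $\phi<0$.

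When $\phi<0$, the definition of minmod forces all three arguments $\beta s^+_{ij}$, $\beta s^-_{ij}$, and $\tfrac12(s^+_{ij}+s^-_{ij})$ to be strictly negative, with $\phi=-\min\bigl(\beta|s^+_{ij}|,\beta|s^-_{ij}|,\tfrac12|s^+_{ij}+s^-_{ij}|\bigr)$. In particular $s^+_{ij}<0$, which means $u_j<u_i$, so $|u_j-u_i|=u_i-u_j$. Since the minimum is no larger than the first entry, $|\phi|\le\beta|s^+_{ij}|=\beta(u_i-u_j)/|\x_i\x_j|$. Substituting into \reff{uij-construct} yields
\[
u_{i\rightarrow j}=u_i+\frac{|\x_i\x_j|}{2}\phi\ge u_i-\frac{|\x_i\x_j|}{2}\,\beta\,\frac{u_i-u_j}{|\x_i\x_j|}=u_i-\frac{\beta}{2}(u_i-u_j).
\]
Because $\beta\le 2$ and $u_i-u_j>0$, the right-hand side is at least $u_i-(u_i-u_j)=u_j>0$, giving $u_{i\rightarrow j}>0$.

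The main (and essentially the only) obstacle is recognizing that one must \emph{discard} the bound coming from $s^-_{ij}$—which could be arbitrarily negative through the unsigned extrapolated value $u^{\rm ext}_{ij}$—and instead use the minmod selection to control $\phi$ by the harmless forward slope $s^+_{ij}$; the condition $\beta\le 2$ is then exactly what is needed to absorb the correction term into the positive quantity $u_j$. The estimate for $u_{j\rightarrow i}$ is identical after swapping $i\leftrightarrow j$, now using $s^+_{ji}=(u_i-u_j)/|\x_i\x_j|$.
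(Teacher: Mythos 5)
Your proof is correct and follows essentially the same route as the paper: both arguments dispose of the nonnegative-$\phi$ cases trivially and, in the negative case, bound $|\phi|$ by $\beta|s^+_{ij}|$ via the minmod selection to obtain $u_{i\rightarrow j}\ge (1-\tfrac{\beta}{2})u_i+\tfrac{\beta}{2}u_j>0$ for $\beta\in[1,2]$. The only difference is cosmetic (you merge the ``mixed signs'' and ``all positive'' cases into a single $\phi\ge 0$ case), so no further comment is needed.
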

\begin{proof}
Without loss of generality, we here only show the proof for $u_{i\rightarrow j}>0$.  The proof for $u_{j\rightarrow i}>0$ can be analagously obtained.

According to the reconstruction~\reff{uij-construct},  it is obvious to see that $u_{i\rightarrow j}=u_i>0$ when $s^+_{ij}$,  $s^-_{ij}$, and $\frac{s^+_{ij}+s^-_{ij}}{2}$ possess different signs. 
If ${\rm sign}(s^+_{ij})={\rm sign}(s^-_{ij})={\rm sign}(\frac{s^+_{ij}+s^-_{ij}}{2})=1$, then we have
\[
u_{i\rightarrow j}=u_i+\frac{|\x_i\x_j|}{2}\min\left\{\beta |s^+_{ij}|,~\beta |s^-_{ij}|,~\frac{|s^+_{ij}+s^-_{ij}|}{2}\right\}>u_i>0.
\]
If ${\rm sign}(s^+_{ij})={\rm sign}(s^-_{ij})={\rm sign}(\frac{s^+_{ij}+s^-_{ij}}{2})=-1$, then we have
\[
\begin{aligned}
u_{i\rightarrow j}&=u_i-\frac{|\x_i\x_j|}{2}\min\left\{\beta|s^+_{ij}|,~\beta|s^-_{ij}|,~\frac{|s^+_{ij}+s^-_{ij}|}{2}\right\}\\
&\geq u_i-\frac{|\x_i\x_j|}{2} \beta|s^+_{ij}|\\
&=u_i-\beta\frac{|\x_i\x_j|}{2}\frac{u_i-u_j}{|\x_i\x_j|} = (1-\frac{\beta}{2})u_i+\frac{\beta}{2} u_j >0~~\mbox{for}~\beta\in[1,2].
\end{aligned}
\]
This completes the proof. \qed
\end{proof}

\section{Properties Preservation}\label{s:properties}
The proposed Scheme I and Scheme II respect several properties of physical significance at the discrete level. To avoid repetition, numerical analysis is only presented for Scheme II, which is more involved in property preservation analysis. 
\begin{theorem}
{\bf (Mass conservation)} The Scheme I \reff{DisPNP} and Scheme II \reff{DisPNP2} respect mass conservation law:
\[
\sum_{i=1}^N\xm(V_i) c^{\caol,n+1}_{i}=\sum_{i=1}^N \xm(V_i)c^{\caol,n}_{i}~~\mbox{for }~\caol=1,2,\cdots,M.
\]
\end{theorem}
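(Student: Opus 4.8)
The plan is to sum the discrete Nernst--Planck balance over all control volumes and show that the aggregate flux vanishes. Summing the second equation of Scheme~II~\reff{DisPNP2} over $i=1,\dots,N$ gives
\[
\frac{1}{\Delta t}\sum_{i=1}^N \xm(V_i)\left(c^{\caol,n+1}_i-c^{\caol,n}_i\right)=\gamma_\caol\sum_{i=1}^N\sum_{\sigma\in\calE_i}\tau_{\sigma}\check{c}^{\caol,n+\frac{1}{2}}_{\sigma}\,D\mu^{\caol,n+\frac{1}{2}}_{i,\sigma},
\]
so it suffices to prove that the right-hand side is zero. The key observation is that the transmissibility $\tau_\sigma$ and the reconstructed mobility $\check{c}^{\caol,n+\frac12}_\sigma$ are quantities attached to the edge $\sigma$ itself, hence single-valued, whereas the difference operator is skew-symmetric across an interior edge.

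First I would reorganize the double sum by edges rather than by (cell, edge) pairs, splitting $\calE=\calE_{int}\cup\calE_{ext}$. Every interior edge $\sigma=\partial V_i\cap\partial V_j\in\calE_{int}$ is counted exactly twice, once from $V_i$ and once from $V_j$. Since $D\mu^{\caol,n+\frac12}_{i,\sigma}=\mu^{\caol,n+\frac12}_j-\mu^{\caol,n+\frac12}_i$ and $D\mu^{\caol,n+\frac12}_{j,\sigma}=\mu^{\caol,n+\frac12}_i-\mu^{\caol,n+\frac12}_j$, these two contributions satisfy
\[
\tau_\sigma\check{c}^{\caol,n+\frac12}_\sigma D\mu^{\caol,n+\frac12}_{i,\sigma}+\tau_\sigma\check{c}^{\caol,n+\frac12}_\sigma D\mu^{\caol,n+\frac12}_{j,\sigma}=0,
\]
so the interior part of the right-hand side cancels pairwise and vanishes. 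For exterior edges $\sigma\in\calE_{ext}=\calE^{\rm D}_{ext}\cup\calE^{\rm N}_{ext}$, the discrete no-flux boundary condition inherited from~\reff{Bcs}, namely $\check{c}^{\caol,n+\frac12}_\sigma\,D\mu^{\caol,n+\frac12}_{i,\sigma}=0$, forces each such term to be zero. Combining the two cases shows the right-hand side is identically zero, whence $\sum_{i}\xm(V_i)c^{\caol,n+1}_i=\sum_{i}\xm(V_i)c^{\caol,n}_i$.

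The argument for Scheme~I~\reff{DisPNP} is structurally identical, with $\check{c}^{\caol,n+\frac12}_\sigma$ and $\mu^{\caol,n+\frac12}$ replaced by $\td c^{\caol,n}_\sigma$ and $\mu^{\caol,n+1}$ respectively, so I would state the two cases together. The only point requiring care---and the one I would flag as the crux---is that the edge mobility is genuinely one number per edge, so that the skew-symmetry of $D$ produces exact cancellation. The upwind rule defining $\check{c}^{\caol,n+\frac12}_\sigma$ selects $\hat{c}^{\caol,n+\frac12}_{i\rightarrow j}$ or $\hat{c}^{\caol,n+\frac12}_{j\rightarrow i}$ according to $\mathrm{sign}\,D(\mu^{\caol,n})_{i,\sigma}$; one checks that this selection is unchanged when the roles of $i$ and $j$ are swapped (for a fixed edge orientation), so the mobility is well defined independently of the cell from which the edge is viewed. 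Once this is noted the cancellation is purely algebraic, and no structural property of the nonlinear $\mu$-update is needed.
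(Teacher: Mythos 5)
Your proof is correct and follows essentially the same route as the paper: sum the discrete Nernst--Planck update over all control volumes, cancel the interior-edge contributions pairwise (the paper leaves this step implicit by writing the sum directly over boundary edges only), and annihilate the remaining exterior-edge terms via the discrete zero-flux condition in \reff{Bcs}. Your extra remark that $\tau_\sigma$ and the upwind mobility are single-valued per edge is a useful clarification but not a departure from the paper's argument.
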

\begin{proof}
It follows from the Scheme II \reff{DisPNP2} that
\[
\begin{aligned}
\sum_{i=1}^N \xm(V_i) \left(c^{\caol,n+1}_{i}-c^{\caol,n}_{i}\right)=\Delta t\gamma_\caol\sum_{i\in\calN_3}\sum_{\sigma\in\calE_{i,ext}} \tau_\sigma\check{c}^{\caol,n+\frac{1}{2}}_{\sigma}D\mu^{\caol,n+\frac{1}{2}}_{i,\sigma}=0,~~\caol=1,2,\cdots,M,
\end{aligned}
\]
where the zero-flux boundary conditions \reff{Bcs} have been used. This completes the proof. The mass conservation for the Scheme I can be analogously derived.
\qed
\end{proof}

\begin{theorem}\label{t:EUP}
{\bf (Existence, uniqueness, and positivity)}
 Define $C_{\rm min}^{\caol,n}:=\underset{1\leq i\leq N}{\min}c^{\caol,n}_{i}$ and  $C_{\rm max}^{\caol,n}:=\underset{1\leq i\leq N}{\max}c^{\caol,n}_{i}$.   Given $C^{\caol,n}_{\rm min}>0$, there exists a unique solution to the Scheme I \reff{DisPNP} and Scheme II \reff{DisPNP2}, such that
\[
c^{\caol,n+1}_{i}>0 ~~\mbox{for}~~ i=1,2,\cdots,N, \mbox{~and~} \caol = 0, 1 , \cdots, M.
\]
\end{theorem}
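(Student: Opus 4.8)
The plan is to realize the (frozen-coefficient) Scheme II \reff{DisPNP2} as the Euler--Lagrange system of a single strictly convex, coercive functional defined on an open convex admissible set that already encodes the positivity of every ionic species \emph{and} of the solvent; existence, uniqueness, and positivity then all follow from the existence of a unique interior minimizer. The first step is to decouple the nonlinearities that are in fact explicit. Since the upwind directions are frozen at $\mathrm{sign}\,D(\mu^{\caol,n})_{i,\sigma}$ and the reconstructed mobilities $\check c^{\caol,n+\frac12}_\sigma$ are assembled solely from the extrapolation data $\hat c^{\caol,n+\frac12}$ (hence from the known $c^{\caol,n},c^{\caol,n-1}$), the hypothesis $C^{\caol,n}_{\rm min}>0$ and Lemma~\ref{t:mobility} make each $\check c^{\caol,n+\frac12}_\sigma$ a \emph{known, strictly positive} weight. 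These weights define for each species a symmetric positive semidefinite discrete weighted Laplacian $\calA^\caol$ via the quadratic form $\gamma_\caol\sum_\sigma\tau_\sigma\check c^{\caol,n+\frac12}_\sigma (D_\sigma u)^2$, whose kernel is the constants and whose range is the mass-zero subspace (the zero-flux conditions \reff{Bcs} annihilate all boundary terms). The discrete Poisson equation expresses $\psi^{n+1}$ as an explicit affine function of $c^{n+1}$, which I substitute to remove $\psi$.

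Next I introduce the admissible set
\[
\calO=\Big\{\,c=(c^\caol_i)\ :\ c^\caol_i>0\ \text{and}\ 1-\sum_{\caok=1}^M a^3_\caok c^\caok_i>0,\quad \forall\,\caol,\ \forall\,i\,\Big\},
\]
which is \emph{bounded} (since $a_\caol^3 c^\caol_i<1$) and convex, and on which $c^{0}_i=a_0^{-3}(1-\sum_\caok a^3_\caok c^\caok_i)>0$. On the intersection of $\calO$ with the mass-preserving affine slices $\sum_i\xm(V_i)c^\caol_i=\sum_i\xm(V_i)c^{\caol,n}_i$ I consider
\[
\cJ(c)=\sum_{\caol=1}^M\frac{1}{2\gamma_\caol\Delta t}\Big\langle \xm(c^\caol-c^{\caol,n}),\,(\calA^\caol)^{+}\,\xm(c^\caol-c^{\caol,n})\Big\rangle+\widetilde F(c),
\]
where $(\calA^\caol)^{+}$ is the pseudoinverse on the mass-zero subspace and $\widetilde F$ is a modified discrete free energy designed so that $\partial_{c^\caol_i}\widetilde F=\mu^{\caol,n+\frac12}_i$; the stationarity condition of $\cJ$ on this slice is then exactly the second equation of \reff{DisPNP2}, the Lagrange multiplier of the mass constraint accounting for the kernel freedom of $\calA^\caol$.

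The heart of the argument is the strict convexity and the boundary blow-up of $\widetilde F$. The entropy correction $\mu^{\caol,n+\frac12}_{e1}$ is the $x$-derivative of a one-variable potential $\Phi^\caol(x;y)$ with $x=c^\caol_i$ and parameter $y=c^{\caol,n}_i$, and
\[
\frac{d}{dx}\mu^{\caol,n+\frac12}_{e1}=\frac1x-\frac{5y}{6x^2}+\frac{y^2}{3x^3}=\frac{1}{x^3}\Big(x^2-\tfrac56\,yx+\tfrac13\,y^2\Big)>0,
\]
because the quadratic in the bracket has negative discriminant $\tfrac{25}{36}y^2-\tfrac43 y^2<0$; moreover $\mu^{\caol,n+\frac12}_{e1}\sim-y^2/(6x^2)\to-\infty$ as $x\to0^+$, so $\Phi^\caol\to+\infty$ there. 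The solvent correction $\mu^{\caol,n+\frac12}_{e2}$ depends on $c$ only through the affine quantity $w_i=1-\sum_\caok a^3_\caok c^\caok_i$ and equals $-\frac{a^3_\caol}{a^3_0}h(w_i)$ with $h$ of the very same form, so the associated potential is convex as a convex function composed with an affine map and blows up as $w_i\to0^+$. The electrostatic part $\frac{z^\caol}{2}\psi^{n+1}_i$ is the gradient of the positive semidefinite quadratic $\frac14\langle\rho^{n+1},\psi^{n+1}\rangle$ (the discrete Poisson operator being symmetric positive definite), while the remaining $\frac{z^\caol}{2}\psi^n_i$ and Born terms are affine. Hence $\widetilde F$ has a positive-definite diagonal entropy Hessian plus positive semidefinite solvent and electrostatic contributions, the kinetic term is a positive semidefinite quadratic, and $\cJ$ is strictly convex with $\cJ\to+\infty$ on all of $\partial\calO$.

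On the bounded convex domain a strictly convex, continuous functional that tends to $+\infty$ along the entire boundary attains a \emph{unique} minimizer in the interior; this minimizer is the unique critical point, hence the unique solution $c^{n+1}$ of Scheme II, and its membership in $\calO$ yields $c^{\caol,n+1}_i>0$ for $\caol=1,\dots,M$ together with $c^{0,n+1}_i=a_0^{-3}w_i>0$, completing the induction (the case of Scheme I is strictly simpler, with $\mu^{\caol,n+1}$ in place of $\mu^{\caol,n+\frac12}$). I expect the main obstacle to be the clean assembly of the mass-constrained minimizing-movement functional---defining $(\calA^\caol)^{+}$ on the mass-zero subspace and verifying that its Euler--Lagrange equations reproduce the frozen-mobility flux form exactly---rather than the convexity itself, which hinges on the discriminant computation above; it is precisely the coefficients $-\tfrac12$ and $-\tfrac16$ in $\mu^{\caol,n+\frac12}_{e1},\mu^{\caol,n+\frac12}_{e2}$ that make those potentials both strictly convex and singular at the boundary.
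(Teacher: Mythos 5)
Your proposal follows the same variational architecture as the paper's proof: freeze the upwind mobilities (positive by Lemma~\ref{t:mobility} and the induction hypothesis), eliminate $\psi^{n+1}$ through the discrete Poisson equation, and identify Scheme II with the Euler--Lagrange system of a strictly convex functional on the mass-constrained slice. Your one-variable potential $\Phi$ with $\Phi'=\mu^{\caol,n+\frac12}_{e1}$ and your solvent potential in the affine variable $w_i=1-\sum_\caok a^3_\caok c^\caok_i$ reproduce exactly the paper's $\cJ_2$ and $\cJ_4$, and your discriminant computation $\tfrac{25}{36}y^2-\tfrac43 y^2<0$ is the convexity mechanism the paper uses implicitly. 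The genuine difference is how boundary minimizers are excluded. The paper minimizes over the compact truncated set $\K_\delta$ and derives a contradiction from the one-sided directional derivative, via the estimate $\log(a^3_{\caol_0}\delta)+\frac{5C^{\caol_0,n}_{\rm max}}{6\delta}-\frac{(C^{\caol_0,n}_{\rm min})^2}{6\delta^2}\to-\infty$. You instead observe that the terms $\frac{(c^{\caol,n}_i)^2}{6\,c^{\caol,n+1}_i}$ and $\frac{(w^{n}_i)^2}{6\,w^{n+1}_i}$ make the functional \emph{value} blow up on all of $\partial\calO$, so coercivity alone places the unique minimizer in the interior. For Scheme II this is correct and arguably cleaner: it avoids the $\delta$-limit bookkeeping and, by putting $w_i>0$ directly into the admissible set, dispenses with the paper's auxiliary condition $\sum_\caok a^3_\caok\nu_\caok<1$.

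The one real gap is your parenthetical claim that Scheme I is ``strictly simpler.'' For Scheme I the entropy contribution to the functional is $c\,[\log(a^3_\caol c)-1]$ plus the analogous solvent term, both of which remain \emph{bounded} as $c\to0^+$ (they tend to $0$), so the value-coercivity argument you rely on fails there: the functional extends continuously to the boundary and nothing in your argument prevents the minimizer from sitting at $c^{\caol}_{i}=0$. One must instead use the singularity of the \emph{derivative} (the inward one-sided directional derivative is $-\infty$ because of $\log\delta$), which is precisely the paper's $\K_\delta$ contradiction argument. Since the theorem covers both schemes, this case needs its own (different) boundary-exclusion step; it is a standard fix, but as written your proof does not cover Scheme I.
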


\begin{proof} We focus on numerical analysis for Scheme II, following the idea of the proof presented in the works~\cite{ChenWangWangWise_JCP2019, QianWangZhou_JCP21, ShenXu_NM21, LiuWangWiseYueZhou_2021}. First, the Scheme II can be rewritten in a matrix form
\begin{equation}\label{Ma_PNP}
\left\{
\begin{aligned}
&\frac{1}{\Delta t}\calB(c^{\caol,n+1}_{\calV}-c^{\caol,n}_{\calV})=-\calA_\caol\left[\mu^{\caol,n+\frac{1}{2}}_{e1,\calV} +\mu^{\caol,n+\frac{1}{2}}_{e2,\calV}+\frac{z^\caol}{2}(\psi^n_\calV+\psi^{n+1}_\calV)+\frac{\chi(z^\caol)^2}{a_\caol}\left(\frac{1}{\ve_\calV}-1\right)\right],\\
&\calP_{\ve}\psi^{n+1}_{\calV}=\calB\left[ \sum_{\caol=1}^M z^\caol c^{\caol,n+1}_{\calV}+\rho^f_{\calV}+b_{\calV}\right],
\end{aligned}
\right.
\end{equation}
where $\calA_\caol$ is a symmetric, positive semidefinite matrix related to $\gamma_\caol$ and $\check{c}^{\caol,n+\frac{1}{2}}_{\calV}$, $\calP_{\ve}$ is a symmetric, positive definite matrix related to $\kappa$ and $\ve$, $\calB$ is a diagonal matrix defined by 
\begin{equation}\label{DiagB}
\calB={\rm diag}\left[\xm(V_1), \xm(V_2),\cdots,\xm(V_N) \right],
\end{equation}
and the column vector $b_{\calV}$ results from the Dirichlet boundary conditions for electrostatic potential.
According to the discretization and zero-flux boundary conditions, one can find that ${\rm Ker}(\calA_\caol)=1$ and ${\rm Rank}(\calA_\caol)=N-1$ with
\[
\calA_\caol\bf{1}=0,
\]
where $\bf{1}$ represents a column vector with all elements being $1$. 
Multiplying the first equation in \reff{Ma_PNP} by the pseudo-inverse matrix $\calA^*_\caol$~\cite{ShenXu_NM21} and the second equation by the inverse matrix $\calP^{-1}_\ve$, respectively, one obtains
\[
\left\{
\begin{aligned}
&\xi_\caol {\bf 1}=\frac{1}{\Delta t}\calA^*_\caol\calB(c^{\caol,n+1}_{\calV}-c^{\caol,n}_{\calV})+\mu^{\caol,n+\frac{1}{2}}_{e1,\calV} +\mu^{\caol,n+\frac{1}{2}}_{e2,\calV}+\frac{z^\caol}{2}(\psi^n_\calV+\psi^{n+1}_\calV)+\frac{\chi(z^\caol)^2}{a_\caol}\left(\frac{1}{\ve_\calV}-1\right),\\
&\psi^{n+1}_{\calV}=\calP^{-1}_{\ve}\calB\left( \sum_{\caol=1}^M z^\caol c^{\caol,n+1}_{\calV}+\rho^f_{\calV}+b_{\calV}\right),
\end{aligned}
\right.
\]
where $\xi_\caol$ is a scalar Lagrange multiplier determined by ionic mass conservation~\cite{ShenXu_NM21}. Multiplying both sides of the first equation by $\calB$, and combining the above two equations together, one arrives at
\begin{equation}\label{ELEqn}
\begin{aligned}
&\frac{1}{\Delta t}\calB\calA^*_\caol\calB(c^{\caol,n+1}_{\calV}-c^{\caol,n}_{\calV})+\calB \left[\mu^{\caol,n+\frac{1}{2}}_{e1,\calV} +\mu^{\caol,n+\frac{1}{2}}_{e2,\calV}+\frac{z^\caol}{2}\psi^n_\calV+\frac{\chi(z^\caol)^2}{a_\caol}\left(\frac{1}{\ve_{\calV}}-1 \right)\right] \\
&\qquad+\frac{z^\caol}{2}\calB\calP^{-1}_{\ve}\calB\left(\sum_{\caok=1}^M z^\caok c^{\caok,n+1}_{\calV}+\rho^f_{\calV}+b_{\calV}  \right)=\xi_\caol\calB{\bf1},~~\caol=1,\cdots, M.
\end{aligned}
\end{equation}
Denote by
\[
\bc^{n+1}=(c^{1,n+1}_{\calV}; c^{2,n+1}_{\calV}; \cdots; c^{M,n+1}_{\calV})^t,
\] 
where the semicolons follow the concatenation rule used in Matlab.
It is straightforward to check that the equation~\reff{ELEqn} corresponds to the critical point of a discrete energy under the constraint of total mass: 
\[
\cJ(\bc^{n+1})=\cJ_1(\bc^{n+1})+\cJ_2(\bc^{n+1})+\cJ_3(\bc^{n+1})+\cJ_4(\bc^{n+1}),
\]
where 
\[
\begin{aligned}
\cJ_1(\bc^{n+1})=&\frac{1}{2\Delta t}\sum_{\caol=1}^M \left(c^{\caol,n+1}_{\calV}-c^{\caol,n}_{\calV}\right)^t\calB\calA^*_\caol\calB\left(c^{\caol,n+1}_{\calV}-c^{\caol,n}_{\calV}\right)\\
&+\frac{1}{4}\left(\sum_{\caol=1}^M z^\caol c^{\caol,n+1}_{\calV}+\rho^f_{\calV}+b_{\calV}  \right)^t\calB\calP^{-1}_{\ve}\calB\left(\sum_{\caol=1}^M z^\caol c^{\caol,n+1}_{\calV}+\rho^f_{\calV}+b_{\calV}  \right),\\
\cJ_2(\bc^{n+1})=&\sum_{\caol=1}^M\left\{(c^{\caol,n+1}_{\calV})^t \calB\left[ \log \left(a^3_\caol c^{\caol,n+1}_{\calV}\right)-{\bf 1}\right]+\frac{5}{6}(c^{\caol,n}_\calV)^t\calB\log c^{\caol,n+1}_\calV\right.\\
&\left.\qquad+\left[\frac{(c^{\caol,n}_\calV)^2}{6}\right]^t\calB\left(\frac{\bf 1}{c_\calV^{\caol,n+1}}\right)-\frac{2}{3}(c^{\caol,n+1}_\calV)^t \calB{\bf 1}\right\},\\
\cJ_3(\bc^{n+1})=&\sum_{\caol=1}^M\left[\frac{\chi(z^\caol)^2}{a_\caol}(c^{\caol,n+1}_{\calV})^t\calB\left(\frac{\bf 1}{\ve_{\calV}}-{\bf 1} \right)+\frac{z^\caol}{2}(c^{\caol,n+1}_\calV)^t\calB\psi^n_\calV\right],\\
\cJ_4(\bc^{n+1})=& \sum_{\caol=1}^M \left[ \frac{2a^3_\caol}{3a^3_0}(c^{\caol,n+1}_\calV)^t \calB {\bf 1}\right]+\frac{5}{6a^3_0} \left( {\bf 1}-\sum_{\caok=1}^M a^3_\caok c^{\caok,n}_\calV\right)^t\calB \log \left( {\bf 1}-\sum_{\caok=1}^M a^3_\caok c^{\caok,n+1}_\calV\right)\\
& +\frac{1}{6a^3_0} \left[\left( {\bf 1}-\sum_{\caok=1}^M a^3_\caok c^{\caok,n}_\calV\right)^2\right]^t\calB  \left(\frac{\bf 1}{{\bf 1}-\sum_{\caok=1}^M a^3_\caok c^{\caok,n+1}_\calV}\right)\\
& +\frac{1}{a^3_0}\left( {\bf 1}-\sum_{\caok=1}^M a^3_\caok c^{\caok,n+1}_\calV\right)^t\calB  \left[\log\left( {\bf 1}-\sum_{\caok=1}^M a^3_\caok c^{\caok,n+1}_\calV\right)-{\bf 1} \right].
\end{aligned}
\]
One can show that $\cJ(\bc^{n+1})$ is a strictly convex functional of $\bc^{n+1}$, by using facts that $\calB$ is a diagonal and positive definite matrix,  $\calA^*_\caol$ and $\calP^{-1}_{\ve}$ are symmetric and nonnegative matrices, and $G:=(g_{ij})_{M\times M}$ with $g_{ij}=a^3_i a^3_j$ is a positive semi-definite matrix.
We consider the existence of a unique minimizer of $\cJ(\bc^{n+1})$ over the domain
\[
\K=\left\{\bc\bigg| 0 < c^\caol_i < \nu_\caol,~\frac{\sum_{i=1}^N \xm(V_i)c^\caol_i}{\xm(\Omega)}= \alpha_\caol,~\sum_{\caok=1}^M a^3_\caok\nu_\caok < 1,~\mbox{for}~\caol=1,\cdots,M,~i=1,\cdots,N \right\},
\]
where $\alpha_\caol$ is the mean concentration of the $\caol$th ionic species and $\nu_\caol=\min\{\frac{\xm(\Omega)\alpha_\caol}{\min_i \xm(V_i)},~\frac{1}{a^3_\caol}\}$. Consider a closed set $\K_{\delta}\subset\K$:
\[
\K_\delta=\left\{\bc\bigg| \delta\leq c^\caol_i\leq \nu_\caol-\delta,~\frac{\sum_{i=1}^N \xm(V_i)c^\caol_i}{\xm(\Omega)}= \alpha_\caol,~\sum_{\caok=1}^M a^3_\caok\nu_\caok < 1,~\mbox{for}~\caol=1,\cdots,M,~i=1,\cdots,N \right\},
\]
where $\delta\in(0,\frac{\nu_\caol}{2})$. Obviously, $\K_\delta$ is a bounded, convex, and compact subset of $\K$. Since $\cJ(\bc^{n+1})$ is convex, there exists a unique minimizer $c^{\caol,n+1}_{\calV}$ of $\cJ(\bc^{n+1})$ in $\K_\delta$. Next, we shall eliminate the case by contradiction that the minimizer is achieved at boundaries, i.e., $c^{\caol,n+1}_i=\delta$ or $c^{\caol,n+1}_i=\nu_\caol-\delta$, as $\delta\rightarrow 0$, at any control volume $V_i$.

Without loss of generality, we assume that there exists one control volume $V_{i_0}$ and the $\caol_0$th ionic species such that $c^{\caol_0,n+1}_{i_0}=\delta$.  Assume that the maximum of $c^{\caol_0,n+1}_{\calV}$ is achieved at the control volume $V_{i_1}$.
Let 
\[
d^{\caol_0,n+1}_{\calV}=c^{\caol_0,n+1}_{\calV}+t \be_{i_0}-t \be_{i_1},
\]
where $t$ is a positive scalar variable and $\be_{i}$ is a column vector with $1$ at the control volume $V_{i}$ and $0$ at the rest. Denote by $\bd_t^{n+1}= (d^{1,n+1}_{\calV}; d^{2,n+1}_{\calV}; \cdots, d^{\caol_0,n+1}_{\calV}; \cdots; d^{M,n+1}_{\calV})^t$. Note that $t$ is positively sufficiently small, so $\bd_t^{n+1}\in \K_\delta$. 

It follows from the Lemma 4.1 in the work~\cite{QianWangZhou_JCP21} that, there exists a constant $C^*_0>0$, which is independent of $\delta$, such that
\begin{equation}\label{J:eq1}
\underset{t\rightarrow 0+}{\lim}\frac{\cJ_1(\bd_t^{n+1})-\cJ_1(\bc^{n+1})}{t} \leq C^*_0.
\end{equation}
See the works~\cite{QianWangZhou_JCP21, LiuWangWiseYueZhou_2021} for more details.  It follows from $c^{\caol_0,n+1}_{i_0}=\delta$ and $c^{\caol_0,n+1}_{i_1}>\alpha_{\caol_0}$ that
\begin{equation}
\begin{aligned}\label{J:eq2}
&\underset{t\rightarrow 0+}{\lim}\frac{\cJ_2(\bd_t^{n+1})-\cJ_2(\bc^{n+1})}{t}\\
= &\xm(V_{i_0})\left[\log (a^3_{\caol_0}\delta)+\frac{5c^{\caol_0,n}_{i_0}}{6\delta}-\frac{(c^{\caol_0,n}_{i_0})^2}{6\delta^2} -\frac{2}{3}\right]\\
&-\xm(V_{i_1})\left[\log (a^3_{\caol_0}c^{\caol_0,n+1}_{i_1})+\frac{5c^{\caol_0,n}_{i_1}}{6c^{\caol_0,n+1}_{i_1}}-\frac{(c^{\caol_0,n}_{i_1})^2}{6(c^{\caol_0,n+1}_{i_1})^2}-\frac{2}{3}\right]\\
\leq & \xm(V_{i_0})\left[\log (a^3_{\caol_0}\delta)+\frac{5C^{\caol_0,n}_{\rm max}}{6\delta}-\frac{(C^{\caol_0,n}_{\rm min})^2}{6\delta^2}  \right]-\xm(V_{i_1})\left[\log (a^3_{\caol_0}\alpha_{\caol_0})-\frac{(C^{\caol_0,n}_{\rm max})^2}{6(\alpha_{\caol_0})^2}-\frac{2}{3}\right]\\
\leq&\xm(V_{i_0})\left[\log (a^3_{\caol_0}\delta)+\frac{5C^{\caol_0,n}_{\rm max}}{6\delta}-\frac{(C^{\caol_0,n}_{\rm min})^2}{6\delta^2}  \right]+\check{C}_1,
\end{aligned}
\end{equation}
where $\check{C}_1=\underset{\caol,i}{\max}\left\{\xm(V_{i})\left[\frac{2}{3}-\log (a^3_{\caol}\alpha_{\caol})+\frac{(C^{\caol,n}_{\rm max})^2}{6(\alpha_{\caol})^2}\right]\right\}$.
By $\ve_{\min}\leq \ve(\x)\leq \ve_{\max}$, we have
\begin{equation}
\begin{aligned}\label{J:eq3}
\underset{t\rightarrow 0+}{\lim}\frac{\cJ_3(\bd_t^{n+1})-\cJ_3(\bc^{n+1})}{t}
&=\xm(V_{i_0})\left[\frac{\chi(z^{\caol_0})^2}{a_{\caol_0}}\left(\frac{1}{\ve_{i_0}}-1\right)+\frac{z^{\caol_0}}{2}\psi^n_{i_0}\right]\\
&\qquad-\xm(V_{i_1})\left[\frac{\chi(z^{\caol_0})^2}{a_{\caol_0}}\left(\frac{1}{\ve_{i_1}}-1\right) +\frac{z^{\caol_0}}{2}\psi^n_{i_1}\right] \\
& \leq  \frac{\chi(z^{\caol_0})^2}{a_{\caol_0}}\left[\frac{\xm(V_{i_0})}{\ve_{\rm min}} +\xm(V_{i_1}) \right]+ \frac{z^{\caol_0}}{2} \left[\xm(V_{i_0}) \psi^n_{i_0} - \xm(V_{i_1}) \psi^n_{i_1} \right ]:=C^*_1,\\
\end{aligned}
\end{equation}
where 
$C^*_1$ is a constant independent of $\delta$.

By $\sum_{\caok=1}^M a^3_\caok \nu_\caok <1$ and an assumption that $\delta<\frac{\nu_{\caol_0}}{2}$, we have
\[
\begin{aligned}
&1-\sum_{\caok=1}^Ma^3_\caok c_{i_0}^{\caok,n+1}>1-\sum_{\caok=1, \caok\neq \caol_0}^M a^3_\caok \nu_{\caok}- a^3_{\caol_0}\delta> a^3_{\caol_0} \nu_{\caol_0}- a^3_{\caol_0}\delta >
\frac{a^{3}_{\caol_0}}{2}\nu_{\caol_0}, \\
&1-\sum_{\caok=1}^Ma^3_\caok c_{i_1}^{\caok,n+1} \geq  1-\sum_{\caok=1}^Ma^3_\caok \left( \nu_\caok -\delta \right).
\end{aligned}
\]
 Therefore, we can derive that
\begin{equation}
\begin{aligned}\label{J:eq4}
&\underset{t\rightarrow 0+}{\lim}\frac{\cJ_4(\bd_t^{n+1})-\cJ_4(\bc^{n+1})}{t}\\
=&\xm(V_{i_0})\left\{-\frac{a^{3}_{\caol_0}}{a^3_0}\log \left(1-\sum_{\caok=1}^Ma^3_\caok c^{\caok,n+1}_{i_0}\right)-\frac{a^3_{\caol_0}\sum_{\caok=1}^Ma^3_{\caok}(c^{\caok,n+1}_{i_0}-c^{\caok,n}_{i_0})}{2a^3_0(1-\sum_{\caok=1}^Ma^3_\caok c^{\caok,n+1}_{i_0})}\right.\\
&\left.\qquad\qquad+\frac{a^3_{\caol_0}\left[\sum_{\caok=1}^M a^3_\caok(c^{\caok,n+1}_{i_0}-c^{\caok,n}_{i_0})\right]^2}{6a^3_0(1-\sum_{\caok=1}^Ma^3_\caok c^{\caok,n+1}_{i_0})^2} \right \}
-\xm(V_{i_1})\left\{-\frac{a^{3}_{\caol_0}}{a^3_0}\log \left(1-\sum_{\caok=1}^Ma^3_\caok c^{\caok,n+1}_{i_1}\right)\right.\\
&\left.\qquad\qquad-\frac{a^3_{\caol_0}\sum_{\caok=1}^Ma^3_\caok(c^{\caok,n+1}_{i_1}-c^{\caok,n}_{i_1})}{2a^3_0(1-\sum_{\caok=1}^Ma^3_\caok c^{\caok,n+1}_{i_1})}+\frac{a^3_{\caol_0}\left[\sum_{\caok=1}^Ma^3_\caok(c^{\caok,n+1}_{i_1}-c^{\caok,n}_{i_1})\right]^2}{6a^3_0(1-\sum_{\caok=1}^Ma^3_\caok c^{\caok,n+1}_{i_1})^2}\right\}\\
\leq&\xm(V_{i_0})\left\{-\frac{a^{3}_{\caol_0}}{a^3_0}\log \left(1-\sum_{\caok=1}^Ma^3_\caok c^{\caok,n+1}_{i_0}\right)+\frac{2a^3_{\caol_0}}{3a^3_0}-\frac{5a^3_{\caol_0}(1-\sum_{\caok=1}^Ma^3_\caok c^{\caok,n}_{i_0})}{6a^3_0(1-\sum_{\caok=1}^Ma^3_\caok c^{\caok,n+1}_{i_0})}\right.\\
&\left.+\frac{a^3_{\caol_0}(1-\sum_{\caok=1}^Ma^3_\caok c^{\caok,n}_{i_0})^2}{6a^3_0(1-\sum_{\caok=1}^Ma^3_\caok c^{\caok,n+1}_{i_0})^2}\right\}+\xm(V_{i_1}) \frac{a^3_{\caol_0}(1-\sum_{\caok=1}^Ma^3_\caok c^{\caok,n}_{i_1})}{2a^3_0(1-\sum_{\caok=1}^Ma^3_\caok c^{\caok,n+1}_{i_1})}\\
\leq& \xm(V_{i_0}) \left[-\frac{a^{3}_{\caol_0}}{a^3_0}\log \left(\frac{a^3_{\caol_0}}{2}\nu_{\caol_0}\right)+\frac{2a^3_{\caol_0}}{3a^3_0}+\frac{a^3_{\caol_0}}{6a^3_0(a^{3}_{\caol_0}\nu_{\caol_0}/2)^2}\right] +\xm(V_{i_1}) \frac{a^3_{\caol_0}}{2a^3_0[1-\sum_{\caok=1}^Ma^3_\caok (\nu_\caok-\delta)]}\\
\leq& \xm(V_{i_0}) \left[-\frac{a^{3}_{\caol_0}}{a^3_0}\log \left(\frac{a^3_{\caol_0}}{2}\nu_{\caol_0}\right)+\frac{2a^3_{\caol_0}}{3a^3_0}+\frac{a^3_{\caol_0}}{6a^3_0(a^{3}_{\caol_0}\nu_{\caol_0}/2)^2}\right] +\xm(V_{i_1}) \frac{a^3_{\caol_0}}{2a^3_0[1-\sum_{\caok=1}^Ma^3_\caok \nu_\caok]}:=C^*_2,
\end{aligned}
\end{equation}
where $C^*_2$ is a constant independent of $\delta$.

Combination of \reff{J:eq1}, \reff{J:eq2}, \reff{J:eq3} and \reff{J:eq4} yields
\[
\begin{aligned}
\underset{t\rightarrow 0+}{\lim}\frac{\cJ(\bd_t^{n+1})-\cJ(\bc^{n+1})}{t}\leq&\xm(V_{i_0})\left[\log (a^3_{\caol_0}\delta)+\frac{5C^{\caol_0,n}_{\rm max}}{6\delta}-\frac{(C^{\caol_0,n}_{\rm min})^2}{6\delta^2}  \right] +\check{C}_1+C^*_0+C^*_1+C^*_2.
\end{aligned}
\]
As $\delta \rightarrow 0+ $, it is easy to see that  
\[
\xm(V_{i_0})\left[\log (a^3_{\caol_0}\delta)+\frac{5C^{\caol_0,n}_{\rm max}}{6\delta}-\frac{(C^{\caol_0,n}_{\rm min})^2}{6\delta^2}  \right]+\check{C}_1+C^*_0+C^*_1+C^*_2<0.
\]
Thus, we obtain
\[
\cJ(\bd_t^{n+1})<\cJ(\bc^{n+1}), ~ \mbox{as~} \delta \rightarrow 0+. 
\]
This contradicts the assumption that, when $\delta$ is sufficiently small, $\bc^{n+1} \in \K_\delta$ is a minimizer of $\cJ$ having $c^{\caol_0,n+1}_{i_0}=\delta$ for the $\caol_0$th ionic species in a control volume $V_{i_0}$. 

Similarly, we can prove that the minimizer of $\cJ$ cannot occur at the upper boundary of $\K_\delta$, when $\delta$ is sufficiently small. 
Therefore, the minimizer of $\cJ$ is achieved at an interior point, i.e., $\bc^{n+1}\in\mathring{\K}_\delta \subset \K$ as $\delta \rightarrow 0$. This establishes the existence of a positive numerical solution to the Scheme II. In addition, the strict convexity of $\cJ$ over $\K$ implies the uniqueness of the numerical solution. 
\qed
\end{proof}

\begin{remark}
Theorem~\ref{t:EUP} establishes that the numerical positivity of solvent concentration $c^0=a_0^{-3}\left[1-\sum_{{\caok}=1}^M a^3_{\caok}c^{\caok}\right]$ is guaranteed both by Scheme I and II.
\end{remark}

We next consider the dissipation of the free energy~\reff{FEnergy} at discrete level. Direct finite-volume discretization of~\reff{FEnergy} reads
\begin{equation}\label{Fh}
\begin{aligned}
F^{n}=&\sum_{i=1}^N\xm(V_i)\left\{\frac{1}{2}\rho^n_i\psi^n_i+\sum_{\caol=0}^Mc^{\caol,n}_i\left[\log (a^3_\caol c^{\caol,n}_i) -1\right]+\sum_{\caol=1}^M \frac{\chi(z^\caol)^2}{a_\caol}c^{\caol,n}_i\left(\frac{1}{\ve_i}-1 \right)\right\}\\
&~+\frac{1}{2}\sum_{i\in \calN_2}\sum_{\sigma\in\calE^N_{i,ext}}\tau_{\sigma}\psi^{\rm N}_{\sigma}\psi^n_{i}-\frac{\kappa}{2}\sum_{i\in\calN_1}\sum_{\sigma\in\calE^D_{i,ext}}\tau_{\sigma}\ve_{\sigma}\psi^{\rm D}_{\sigma}D\psi^n_{i,\sigma}.
\end{aligned}
\end{equation}

\begin{theorem}\label{EnDis}
{\bf (Energy dissipation)}
\begin{compactenum}
\item[\rm (1)]
The solution to the Scheme I \reff{DisPNP} respects fully-discrete free-energy dissipation, i.e.,
\begin{equation}\label{ED1:eq0}
\begin{aligned}
F^{n+1}-F^n\leq -\Delta t\sum_{\caol=1}^M\sum_{\sigma\in\calE_{int}} \tau_{\sigma}\gamma_\caol\td c^{\caol,n}_{\sigma} \bigg|D_{\sigma}\mu^{\caol,n+1}\bigg|^2\leq 0.
\end{aligned}
\end{equation}
\item[\rm (2)]
The solution to the Scheme II \reff{DisPNP2} respects fully-discrete free-energy dissipation, i.e.,
\begin{equation}\label{ED2:eq0}
\begin{aligned}
F^{n+1}-F^n\leq -\Delta t\sum_{\caol=1}^M\sum_{\sigma\in\calE_{int}} \tau_{\sigma}\gamma_\caol\check{c}^{\caol,n+\frac{1}{2}}_{\sigma} \bigg|D_{\sigma}\mu^{\caol,n+\frac{1}{2}}\bigg|^2\leq 0.
\end{aligned}
\end{equation}
\end{compactenum}
\end{theorem}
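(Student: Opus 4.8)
The plan is to establish, for both schemes, the master inequality
\[
F^{n+1}-F^n\leq \sum_{\caol=1}^M\sum_{i=1}^N \xm(V_i)\,\mu^{\caol,\star}_i\big(c^{\caol,n+1}_i-c^{\caol,n}_i\big),
\]
where $\mu^{\caol,\star}_i=\mu^{\caol,n+1}_i$ for Scheme I and $\mu^{\caol,\star}_i=\mu^{\caol,n+\frac12}_i$ for Scheme II. Once this is in hand, I would substitute the discrete NP update (the second line of \reff{DisPNP} or \reff{DisPNP2}) for $\xm(V_i)(c^{\caol,n+1}_i-c^{\caol,n}_i)$ and perform discrete summation by parts over edges. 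Each interior edge $\sigma=\partial V_i\cap\partial V_j$ then contributes $-\tau_\sigma\gamma_\caol\check{c}^{\caol,n+\frac12}_\sigma|D_\sigma\mu^{\caol,n+\frac12}|^2$, using $(D\mu)_{i,\sigma}=-(D\mu)_{j,\sigma}$, while every boundary edge vanishes by the zero-flux condition \reff{Bcs}. Combined with the positivity of the reconstructed mobilities from Lemma~\ref{t:mobility}, this reproduces exactly the right-hand sides of \reff{ED1:eq0} and \reff{ED2:eq0}, both manifestly nonpositive.

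The core of the argument is to verify the master inequality term-by-term against the four energy components in \reff{Fh}. The Born and fixed-charge contributions are linear in the concentrations, so their energy increments match the corresponding pieces of $\mu^{\caol,\star}$ with equality. For the electrostatic energy $\tfrac12\rho\psi$ together with its boundary terms, I would exploit the self-adjointness of the discrete Poisson operator $\calP_\ve$ with the mixed boundary data: for a symmetric quadratic form the identity $\langle\rho^{n+1},\psi^{n+1}\rangle-\langle\rho^n,\psi^n\rangle=\langle\rho^{n+1}-\rho^n,\psi^{n+1}+\psi^n\rangle$ (up to a factor) reproduces exactly the Crank--Nicolson term $\tfrac{z^\caol}{2}(\psi^n_i+\psi^{n+1}_i)$ of Scheme II, whereas convexity of the same form yields the one-sided bound with $z^\caol\psi^{n+1}_i$ for Scheme I. The delicate part is the ionic and solvent entropies. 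Writing $f(c)=c(\log(a^3_\caol c)-1)$ with $f''''(c)=2/c^3>0$, a fourth-order Taylor expansion of $f(c^{\caol,n})$ about $c^{\caol,n+1}$ shows
\[
f(c^{\caol,n+1})-f(c^{\caol,n})=\mu^{\caol,n+\frac12}_{e1}\,(c^{\caol,n+1}-c^{\caol,n})-\tfrac{1}{12\xi^3}(c^{\caol,n+1}-c^{\caol,n})^4
\]
for some $\xi$ between $c^{\caol,n}$ and $c^{\caol,n+1}$; the first three Taylor terms are precisely $\mu^{\caol,n+\frac12}_{e1}$ times the increment by construction, and the quartic remainder is nonnegative since positivity (Theorem~\ref{t:EUP}) forces $\xi>0$. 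An identical expansion in the solvent variable $s=1-\sum_\caok a^3_\caok c^\caok$, after using $\sum_\caol a^3_\caol(c^{\caol,n+1}-c^{\caol,n})=-(s^{n+1}-s^n)$ to collapse the $\caol$-sum, matches $\sum_\caol\mu^{\caol,n+\frac12}_{e2}(c^{\caol,n+1}-c^{\caol,n})$ with the same favorable quartic remainder.

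The main obstacle I anticipate is precisely this matching of the higher-order entropy corrections: one must check that the coefficients $-\tfrac{1}{2c^{n+1}}$ and $-\tfrac{1}{6(c^{n+1})^2}$ in $\mu^{\caol,n+\frac12}_{e1}$ (and their solvent analogues in $\mu^{\caol,n+\frac12}_{e2}$) reproduce the Taylor coefficients $-\tfrac12 f''$ and $+\tfrac16 f'''$ exactly, so that what is left is a pure fourth-order term of definite sign. This is where the design of Scheme II is essential and where positivity of \emph{both} ionic and solvent concentrations is indispensable. For Scheme I the analogous step is simpler: convexity of $f$ gives the standard bound $f(c^{\caol,n+1})-f(c^{\caol,n})\leq f'(c^{\caol,n+1})(c^{\caol,n+1}-c^{\caol,n})$ directly, the same convexity applies to the solvent entropy and the electrostatic quadratic form, and the summation-by-parts step is identical.
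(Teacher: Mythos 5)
Your proposal is correct and follows essentially the same route as the paper's proof: test the discrete Nernst--Planck update against $\mu^{\caol,n+\frac12}$ (resp.\ $\mu^{\caol,n+1}$) and sum by parts to produce the nonpositive dissipation term, bound the ionic and solvent entropy increments via the fourth-order Taylor expansion of $H(s)=s(\log(a^3_\caol s)-1)$ with $H^{(4)}>0$ so that the truncated expansion equals $\mu^{\caol,n+\frac12}_{e1}$ and $\mu^{\caol,n+\frac12}_{e2}$ exactly, and handle the electrostatic plus boundary terms through the symmetric quadratic-form identity for the discrete Poisson operator. Your explicit identification of the quartic remainder $-\tfrac{1}{12\xi^3}(c^{\caol,n+1}-c^{\caol,n})^4$ and the convexity-based treatment of Scheme I match what the paper does (the paper only writes out Scheme II), so no gap remains.
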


\begin{proof}
We focus on numerical analysis for the energy dissipation of Scheme II. Multiplying the second equation in Scheme II \reff{DisPNP2} by $\Delta t\mu^{\caol,n+\frac{1}{2}}_i$ and summing over $i= 1, 2, \cdots, N$ and $\caol = 1, 2, \cdots, M$, we obtain
\begin{equation}\label{ED2:eq1}
\sum_{\caol=1}^M\sum_{i=1}^N \xm(V_i) (c^{\caol,n+1}_i-c^{\caol,n}_i)\mu^{\caol,n+\frac{1}{2}}_i=-\Delta t\sum_{\caol=1}^M\sum_{\sigma\in\calE_{int}} \tau_{\sigma}\gamma_\caol\check{c}^{\caol,n+\frac{1}{2}}_{\sigma} \bigg|D_{\sigma}\mu^{\caol,n+\frac{1}{2}}\bigg|^2 \leq 0,
\end{equation}
where discrete integration by parts, zero-flux boundary conditions, and Lemma~\ref{t:mobility} have been used.  
The following Taylor expansion is valid: for $H(s)\in C^4(\R)$ and $x,~y\in\R$,
\[
H(x)=H(y)+H^{(1)}(y)(x-y)+\frac{1}{2}H^{(2)}(y)(x-y)^2+\frac{1}{6}H^{(3)}(y)(x-y)^3+\frac{1}{24}H^{(4)}(\eta)(x-y)^4,
\]
where $\eta$ is between $x$ and $y$, and $H^{(p)}(y)=\frac{\partial^{p}H}{\partial y^p}$ for $p=1,2,3,4$. If $H^{(4)}(\eta)>0$, then
\[
H(y)-H(x)\leq \left(H^{(1)}(y)-\frac{1}{2}H^{(2)}(y)(y-x)+\frac{1}{6}H^{(3)}(y)(y-x)^2\right)(y-x).
\]
Taking $H(s)=s [\log (a^3_\caol s)-1]$, $x=c_i^{\caol,n}$, and $y=c_i^{\caol,n+1}$, one has $H^{(4)}(s)=2s^{-3}>0$ for $s>0$. Therefore,
\begin{equation}\label{ED2:eq2}
\begin{aligned}
c_i^{\caol,n+1} [\log (a^3_\caol c_i^{\caol,n+1})-1]-c_i^{\caol,n} [\log (a^3_\caol c_i^{\caol,n})-1]\leq &\left[\log(a^3_\caol c_i^{\caol,n+1})-\frac{1}{2c_i^{\caol,n+1}}(c_i^{\caol,n+1}-c_i^{\caol,n})\right.\\
&\left.-\frac{1}{6(c_i^{\caol,n+1})^2}(c_i^{\caol,n+1}-c_i^{\caol,n})^2 \right](c_i^{\caol,n+1}-c_i^{\caol,n})\\
\leq&\mu^{\caol,n+\frac{1}{2}}_{e1, i}(c_i^{\caol,n+1}-c_i^{\caol,n}).
\end{aligned}
\end{equation}
Taking $H(s)=\frac{s}{a^3_0}\left(\log s -1\right)$, $x=1-\sum_{\caok=1}^M a^3_\caok c_i^{\caok,n}$, and $y=1-\sum_{\caok=1}^M a^3_\caok c_i^{\caok,n+1}$, one has $H^{(4)}(s)=\frac{2}{a^3_0 s^3}>0$ for $s>0$.  Therefore, 
\begin{equation}\label{ED2:eq3}
\begin{aligned}
\frac{1-\sum_{\caok=1}^M a^3_\caok c_i^{\caok,n+1}}{a^3_0}&\left[\log \left(1-\sum_{\caok=1}^M a^3_\caok c_i^{\caok,n+1}\right) -1\right]-\frac{1-\sum_{\caok=1}^M a^3_\caok c_i^{\caok,n}}{a^3_0}\left[\log \left(1-\sum_{\caok=1}^M a^3_\caok c_i^{\caok,n}\right) -1\right]\\
&\leq\sum_{\caol=1}^M\left[-\frac{a^3_\caol}{a^3_0}\log\left(1-\sum_{\caok=1}^Ma^3_\caok c_i^{\caok,n+1}\right)-\frac{a^3_\caol\sum_{\caok=1}^M a^3_\caok(c_i^{\caok,n+1}-c_i^{\caok,n})}{2a^3_0(1-\sum_{\caok=1}^Ma^3_\caok c_i^{\caok,n+1})}\right.\\
&\qquad\left.+\frac{a^3_\caol\left[\sum_{\caok=1}^M a^3_\caok(c_i^{\caol,n+1}-c_i^{\caol,n})\right]^2}{6a^3_0(1-\sum_{\caok=1}^Ma^3_\caok c_i^{\caok,n+1})^2} \right](c_i^{\caol,n+1}-c_i^{\caol,n})\\
&\leq \sum_{\caol=1}^M\mu^{\caol,n+\frac{1}{2}}_{e2, i}(c_i^{\caol,n+1}-c_i^{\caol,n}).
\end{aligned}
\end{equation}
Furthermore, we obtain by the discrete Poisson's equation and integration by parts that
\begin{equation}\label{ED2:eq4}
\begin{aligned}
\frac{1}{2}\sum_{i=1}^N \xm(V_i) \left(\rho^{n+1}_i\psi^{n+1}_i-\rho^{n}_i\psi^{n}_i \right)&+\frac{1}{2}\sum_{i\in\calN_2}\sum_{\sigma\in\calE^N_{i,ext}}\tau_{\sigma}\psi^{\rm N}_{\sigma}(\psi^{n+1}_{i}-\psi^n_{i})\\
&-\frac{\kappa}{2}\sum_{i\in\calN_1}\sum_{\sigma\in\calE^D_{i,ext}}\tau_{\sigma}\ve_{\sigma}\psi^{\rm D}_{\sigma}D(\psi^{n+1}-\psi^n)_{i,\sigma}\\
=&\sum_{\caol=1}^M\sum_{i=1}^N \frac{z^\caol}{2}\xm(V_i)(\psi^n_i+\psi^{n+1}_i)(c^{\caol,n+1}_i-c^{\caol,n}_i).
\end{aligned}
\end{equation}
Combination of \reff{ED2:eq2}-\reff{ED2:eq4} and \reff{ED2:eq1} leads to \reff{ED2:eq0}. This completes the proof.\qed
\end{proof}

\begin{remark}
The discrete energy dissipation law~\reff{ED1:eq0} and \reff{ED2:eq0} approximate their analytical counterpart \reff{dF/dt} with first-order and second-order accuracy, respectively. 
\end{remark}

With zero-flux boundary conditions for ionic concentrations, Scheme I and Scheme II both can be shown to preserve the steady state. 
\begin{theorem}{\bf (Preservation of steady state)}
Assume that $\rho^f$, $\psi^{\rm D}$, and $\psi^{\rm N}$ are bounded and smooth functions.  Both the numerical Scheme I and Scheme II preserve the steady state in the sense that, for a given mesh,  the numerical solution 
\[
(c^{1,n}_{\calV}~,\cdots,c^{M,n}_{\calV},\psi^n_{\calV}) \rightarrow (c^{1,\infty}_{\calV},\cdots,c^{M,\infty}_{\calV},\psi^\infty_{\calV})~~~\mbox{as }n\rightarrow\infty,
\]
where $(c^{1,\infty}_{\calV},\cdots,c^{M,\infty}_{\calV},\psi^\infty_{\calV})$ is the solution on the same mesh to the discrete Poisson--Boltzmann system
\begin{equation}\label{PBsys}
\left\{
\begin{aligned}
&-\kappa \sum_{\sigma\in\calE_i}\tau_{\sigma}\ve_{\sigma}D\psi^{\infty}_{i,\sigma}=\xm(V_i)\left(\sum_{\caol=1}^M z^\caol c^{\caol, \infty}_i+\rho^f_i\right), ~~\forall V_i\in \calV,\\
&\mu^{\caol,\infty}=\log \left(a^3_\caol c^{\caol,\infty}_i \right)-\left(\frac{a_\caol}{a_0}\right)^3\log\left(1-\sum_{\caok=1}^M a^3_\caok c^{\caok,\infty}_i \right)+z^\caol\psi^{\infty}_i+\frac{\chi(z^\caol)^2}{a_\caol}\left(\frac{1}{\ve_i}-1\right),\\
&\sum_{i=1}^N\xm(V_i) c^{\caol,\infty}_{i}=\sum_{i=1}^N \xm(V_i)c^{\caol,0}_{i}.
\end{aligned}
\right.
\end{equation}
\end{theorem}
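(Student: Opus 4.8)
The plan is to use the discrete energy dissipation of Theorem~\ref{EnDis} as a Lyapunov functional, combined with a compactness argument and the uniqueness of the discrete steady state. I present the argument for Scheme~II; the proof for Scheme~I is identical after replacing $\check{c}^{\caol,n+\frac12}_\sigma$ and $\mu^{\caol,n+\frac12}$ by $\td c^{\caol,n}_\sigma$ and $\mu^{\caol,n+1}$. By Theorem~\ref{t:EUP} the iterates $(c^{\caol,n}_\calV)_\caol$ remain in the compact, mass-constrained set $\overline{\K}$, on which the free energy $F^n$ of~\reff{Fh} extends continuously: the entropic contributions $c(\log(a^3_\caol c)-1)$ and the solvent term vanish as their arguments tend to $0$, the electrostatic term is a bounded quadratic form, and $\rho^f,\psi^{\rm D},\psi^{\rm N}$ are bounded by hypothesis. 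Hence $F$ is bounded below on $\overline{\K}$, and since Theorem~\ref{EnDis} gives $F^{n+1}\le F^n$, the sequence $F^n$ decreases monotonically to a finite limit $F^\infty$.

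Next I would extract the decay of the dissipation. Telescoping~\reff{ED2:eq0} over $n$ gives
\begin{equation*}
\sum_{n=0}^{\infty}\Delta t\sum_{\caol=1}^M\sum_{\sigma\in\calE_{int}}\tau_\sigma\gamma_\caol\,\check{c}^{\caol,n+\frac12}_\sigma\big|D_\sigma\mu^{\caol,n+\frac12}\big|^2\le F^0-F^\infty<\infty,
\end{equation*}
a convergent series of nonnegative terms, so its general term tends to $0$. Since the concentrations lie in $\overline{\K}$, the reconstructed mobilities $\check{c}^{\caol,n+\frac12}_\sigma$ are bounded above, and a Cauchy--Schwarz estimate applied to the update equation in~\reff{DisPNP2} additionally yields $c^{\caol,n+1}_i-c^{\caol,n}_i\to0$. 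Passing to a convergent subsequence $c^{\caol,n_m}_\calV\to c^{\caol,\infty}_\calV$ in $\overline{\K}$, the linear Poisson solve (with $\calP_\ve$ invertible) gives $\psi^{n_m}_\calV\to\psi^\infty_\calV$, while the step-difference estimate forces $c^{\caol,n_m\pm1}_\calV\to c^{\caol,\infty}_\calV$ as well, so that the Crank--Nicolson and extrapolation corrections in $\mu^{\caol,n+\frac12}_{e1}$ and $\mu^{\caol,n+\frac12}_{e2}$ collapse onto their equilibrium values.

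Granting that this subsequential limit is interior to $\K$, so that every $\mu^{\caol,\infty}_i$ is finite, the identification with~\reff{PBsys} is immediate. Positivity of the mobilities (Lemma~\ref{t:mobility}) together with the vanishing of $\check{c}^{\caol,n+\frac12}_\sigma\big|D_\sigma\mu^{\caol,n+\frac12}\big|^2$ forces $D_\sigma\mu^{\caol,\infty}=0$ on every interior edge, and connectivity of the mesh makes each $\mu^{\caol,\infty}_i$ independent of $i$; this is exactly the Boltzmann relation in the second line of~\reff{PBsys}, while the discrete Poisson equation and the conserved mass supply the first and third lines. Finally, \reff{PBsys} is the critical-point condition for minimizing the discrete free energy over the mass constraint, and the strict convexity established as in Theorem~\ref{t:EUP} (positive definiteness of $\calB$, nonnegativity of $\calP^{-1}_\ve$, and the $c\log c$ structure) guarantees a unique, interior minimizer. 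Hence every convergent subsequence of $(c^n_\calV,\psi^n_\calV)$ has the same limit, and since the sequence lies in a compact set, the whole sequence converges to it.

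The main obstacle is establishing the interiority of the subsequential limit, i.e.\ ruling out degeneracy $c^{\caol_0,\infty}_{i_0}=0$ or $c^{0,\infty}_{i_0}=0$, since only then are the chemical potentials finite and the characterization $D_\sigma\mu^{\caol,\infty}=0$ meaningful. This cannot be read off from the energy bound, because $c(\log(a^3_\caol c)-1)\to0$ rather than $+\infty$ as $c\to0$, and it is not a direct consequence of Theorem~\ref{t:EUP} either: at a degenerating edge the mobility $\check{c}^{\caol,n+\frac12}_\sigma$ itself vanishes, so the decaying dissipation tolerates an unbounded jump $D_\sigma\mu$, and the interior-point bound deteriorates as $C^{\caol,n}_{\rm min}\to0$. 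The route I would pursue is to recast $c^{\caol,\infty}_\calV$ as a fixed point of the scheme's update map --- each step is the unique minimizer of the strictly convex functional $\cJ$ of Theorem~\ref{t:EUP} --- and to transfer that theorem's interior-point contradiction to the limiting functional, using mass conservation to keep a reservoir cell at concentration $\ge\alpha_{\caol_0}>0$ and the singular $\log\delta$, $\delta^{-1}$, $\delta^{-2}$ directional-derivative terms to force the limit off $\partial\K$; the upper-boundary case $c^{0,\infty}_{i_0}=0$ is excluded identically via the solvent term. Making this limiting non-degeneracy argument rigorous, rather than the energy bookkeeping of the earlier steps, is the technical heart of the theorem.
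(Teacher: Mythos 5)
Your proposal follows the same core route as the paper --- the discrete free energy $F^n$ of \reff{Fh} serves as a Lyapunov functional via Theorem~\ref{EnDis}, the vanishing of the dissipation forces spatially uniform chemical potentials, and the limit is identified with the discrete Poisson--Boltzmann system \reff{PBsys} --- but you carry it out with substantially more care than the paper does. The paper's proof occupies only a few lines: it observes that $F^n$ is bounded below and monotone, hence convergent, then asserts that ``taking $n\to\infty$'' in \reff{ED2:eq0} yields uniform $\mu^{\caol,\infty}$ and that $c^{\caol,n+1}_i=c^{\caol,n}_i=c^{\caol,\infty}_i$, without telescoping the dissipation into a summable series, without extracting a convergent subsequence or upgrading it to full-sequence convergence, and without addressing whether the limit is interior to $\K$ so that the logarithms in $\mu^{\caol,\infty}$ are finite. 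Your additions --- the summability argument, the compactness/subsequence extraction, the step-difference estimate that collapses the Crank--Nicolson and extrapolation corrections in $\mu^{\caol,n+\frac12}_{e1}$ and $\mu^{\caol,n+\frac12}_{e2}$ onto their equilibrium values, and the uniqueness-via-strict-convexity argument that forces all subsequential limits to coincide --- are exactly what is needed to make the paper's sketch rigorous. The one point you flag as unresolved, the non-degeneracy of the limit (ruling out $c^{\caol_0,\infty}_{i_0}=0$ when the vanishing mobility $\check{c}^{\caol,n+\frac12}_\sigma$ could mask an unbounded jump $D_\sigma\mu$), is a genuine gap, but it is a gap in the paper's own argument as well, which does not mention it; your proposed remedy of transferring the interior-point contradiction of Theorem~\ref{t:EUP} to the limiting functional is the natural way to close it. In short: same approach as the paper, executed more completely, with an honest identification of the step both arguments leave informal.
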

\begin{proof}
We present numerical analysis for the Scheme II.  For a given mesh, one can verify with the bounded ionic concentrations that the discrete energy
$F^n$ is bounded below. Since the discrete energy is monotonically decreasing, the limit of the sequence $\{F^n\}_{n=0}^\infty$ exists. Taking $n\rightarrow\infty$ in \reff{ED2:eq0}, we have spatially uniform chemical potentials, $\{\mu^{\caol,\infty}\}_{\caol=0}^M$,  at the steady state. By \reff{DisPNP2}, we have $c^{\caol,n+1}_i=c^{\caol,n}_i = c^{\caol,\infty}_i$ and $\psi^{n+1}_i=\psi^{n}_i = \psi^{\infty}_i$ further by the discrete Poisson's equation. The spatially uniform chemical potentials are given by
\begin{equation}\label{muinfy}
\begin{aligned}
\mu^{\caol,\infty}=\log a^3_\caol c^{\caol,\infty}_i-\left(\frac{a_\caol}{a_0}\right)^3\log\left(1-\sum_{\caok=1}^M a^3_\caok c^{\caok,\infty}_i \right)&+z^\caol\psi^{\infty}_i+\frac{\chi(z^\caol)^2}{a_\caol}\left(\frac{1}{\ve_i}-1\right)\\
&\mbox{for}~~\caol=1,\cdots,M,~i=1,\cdots,N,
\end{aligned}
\end{equation}
and can be further numerically determined by the ionic mass conservation for each species. Note that the nonlinear algebraic equations \reff{muinfy} can be solving by coupling the discrete Poisson's equation.  In summary, the ionic concentrations and electrostatic potential can be determined by solving the discrete Poisson--Boltzmann system~\reff{PBsys}. On the other hand, if the system starts at the steady state satisfying the discrete Poisson--Boltzmann system~\reff{PBsys}, then the system remains at the steady state by \reff{DisPNP2}.  This completes the proof on preservation of the steady state. \qed
\end{proof}

\section{Numerical Tests}
\subsection{Accuracy}\label{s:test1}
We now test the accuracy of the Scheme I \reff{DisPNP} and Scheme II \reff{DisPNP2} for the modified PNP equations with two ionic species on a 2D computational domain $\Omega=[0,1]^2$:
\begin{equation}\label{eq:ex}
\left\{
\begin{aligned}
&\partial_t c^1=\nabla\cdot\left[\nabla c^1 + c^1 \nabla\psi+\frac{a^3_1 c^1( a^3_1\nabla c^1+a^3_2\nabla c^2)}{a^3_0(1-a^3_1 c^1-a^3_2 c^2)}+\frac{\chi}{a_1}c^1\nabla\bigg(\frac{1}{\ve}-1\bigg)\right]+f_1,\\
&\partial_t c^2=\nabla\cdot\left[\nabla c^2 - c^2 \nabla\psi+\frac{a^3_2 c^2( a^3_1\nabla c^1+a^3_2\nabla c^2)}{a^3_0(1-a^3_1 c^1-a^3_2 c^2)}+\frac{\chi}{a_2}c^2\nabla\bigg(\frac{1}{\ve}-1\bigg)\right]+f_2,\\
&-\kappa\nabla\cdot\ve(x)\nabla\psi=c^1-c^2+\rho^f.
\end{aligned}
\right.
\end{equation}
In numerical simulations, we take $a_0=0.3$, $a_1=0.1$, $a_2=0.2$, $\kappa=1$, and $\chi=1$, and consider a dielectric coefficient with a $y$-independent profile 
\begin{equation}\label{Eps1}
\ve(x)=78\left( \frac{15}{39}+\frac{24/39}{1+e^{-50|x-\frac{1}{2}|+10}}\right).
\end{equation}
The source terms $f_1$, $f_2$, and $\rho^f$ are determined by the following exact solution
\begin{equation}\label{ExS}
\left\{
\begin{aligned}
&c^1=0.1e^{-t}\cos(\pi x)\cos(\pi y)+0.2,\\
&c^2=-0.1e^{-t}\cos(\pi x)\cos(\pi y)+0.2,\\
&\psi=\frac{1}{10\kappa\pi^2}e^{-t}\cos(\pi x)\cos(\pi y).
\end{aligned}
\right.
\end{equation}
The initial conditions are obtained by evaluating the exact solution at $t=0$. We consider zero-flux boundary conditions for concentrations:
\begin{equation}\label{0flux}
\left\{
\begin{aligned}
&\left[\nabla c^1+c^1\nabla\psi+\frac{a^3_1 c^1( a^3_1\nabla c^1+a^3_2\nabla c^2)}{a^3_0(1-a^3_1 c^1-a^3_2 c^2)}+\frac{\chi}{a_1}c^1\nabla\bigg(\frac{1}{\ve}-1\bigg)\right]\cdot\bn=0 \quad \text{on } \partial\Omega,\\
 &\left[\nabla c^2-c^2\nabla\psi+\frac{a^3_2 c^2( a^3_1\nabla c^1+a^3_2\nabla c^2)}{a^3_0(1-a^3_1 c^1-a^3_2 c^2)}+\frac{\chi}{a_2}c^2\nabla\bigg(\frac{1}{\ve}-1\bigg)\right]\cdot\bn=0 \quad \text{on } \partial\Omega,
\end{aligned}
\right.
\end{equation}
and the following boundary conditions for electrostatic potential:
\[
\left\{
\begin{aligned}
 &\psi(t,0,y)=\frac{1}{10\kappa\pi^2}e^{-t}\cos(\pi y),~\psi(t,1,y)=-\frac{1}{10\kappa\pi^2}e^{-t}\cos(\pi y), \quad y\in[0,1],\\
&\frac{\partial\psi}{\partial y}(t,x,0)=\frac{\partial\psi}{\partial y}(t,x,1)=0,~~x\in [0,1].\\
\end{aligned}
\right.
\]

\begin{figure}[H]
\centering
\subfigure[Scheme I]{\includegraphics[scale=.4]{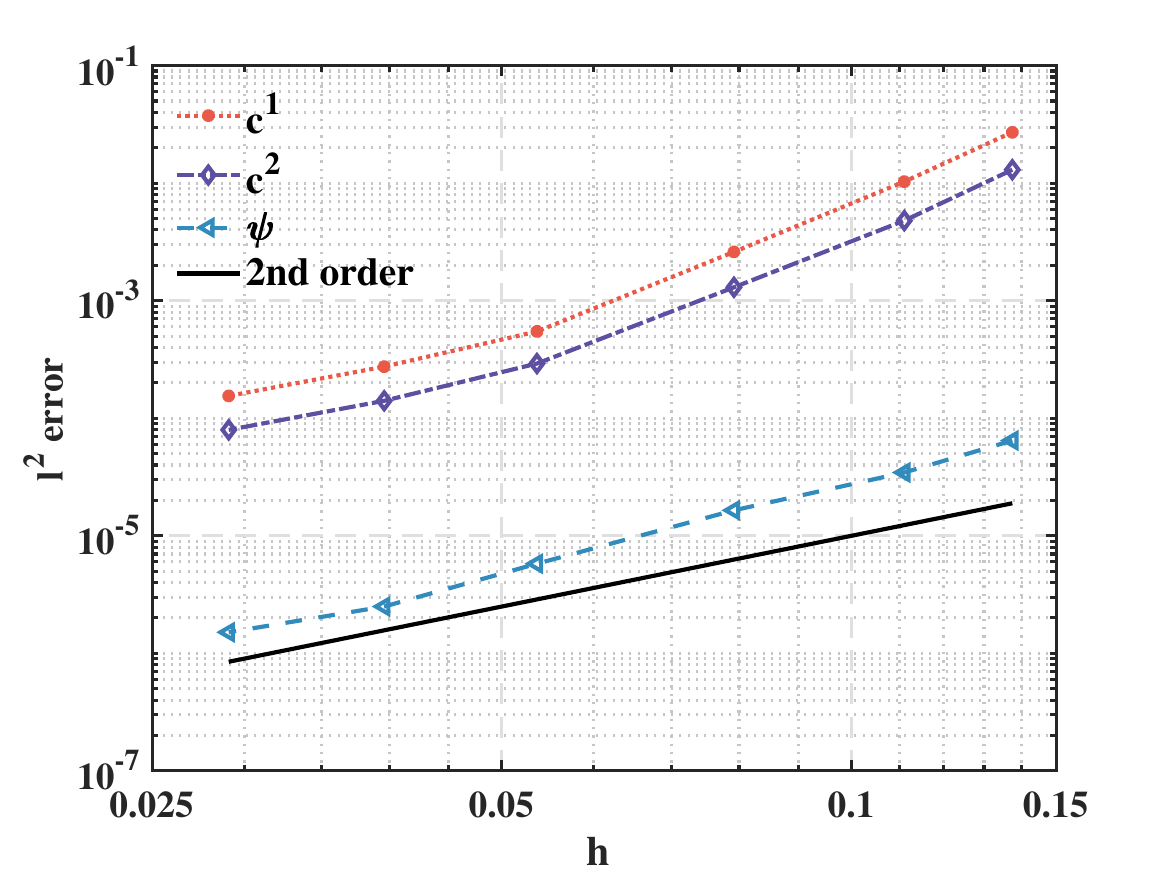}}
\subfigure[Scheme II]{\includegraphics[scale=.4]{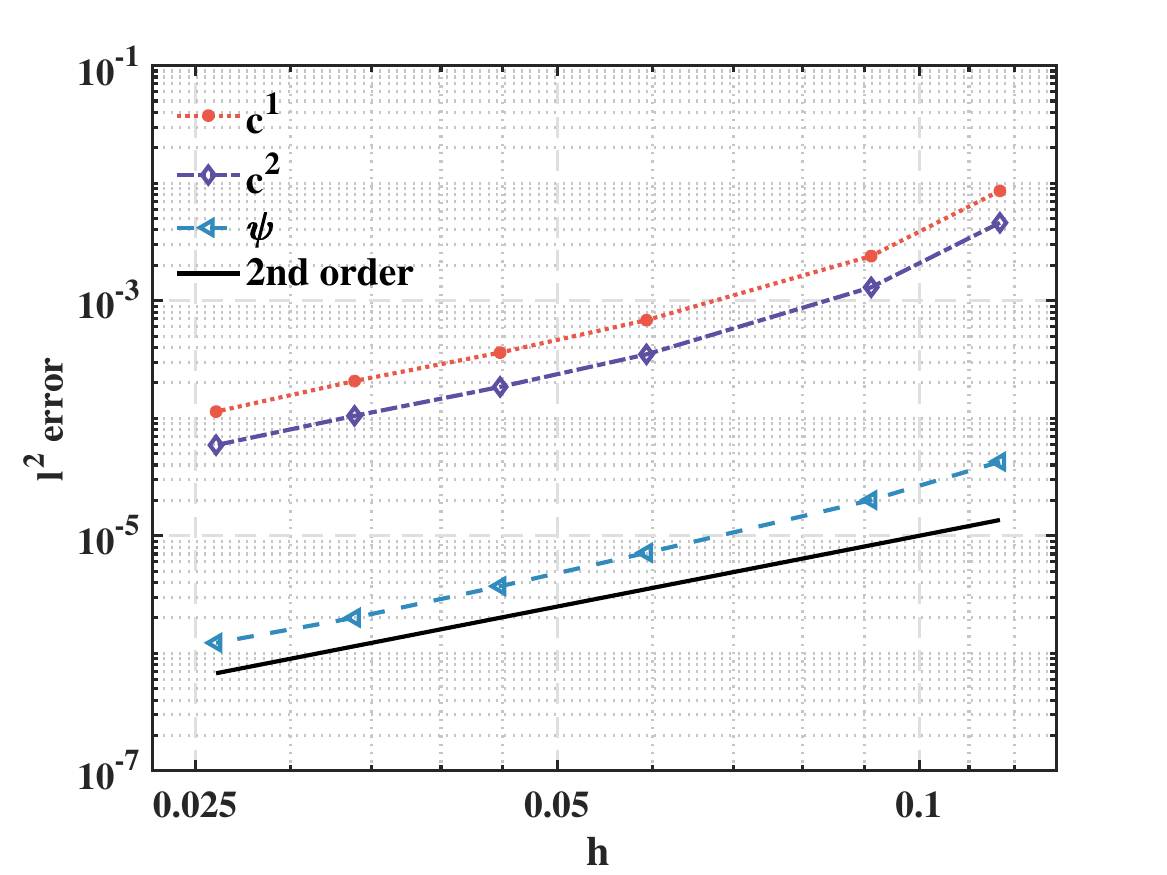}}
\caption{Numerical error of $c^1$, $c^2$, and $\psi$ at time $T=0.1$ obtained by (a) Scheme I with a mesh ratio $\Delta t=h^2$ and (b) Scheme II with a mesh ratio $\Delta t=h/10$.}
\label{f:error1}
\end{figure}
We first test numerical accuracy of the Scheme I utilizing various spatial step size $h$ with a fixed mesh ratio $\Delta t=h^2$. Figure~\ref{f:error1} (a) records $l^2$ errors of ionic concentration and electrostatic potential at time $T=0.1$.  One can observe that the error decreases as the mesh refines, and that the convergence rate for both ion concentrations and electrostatic potential approaches $O(h^2)$ as $h$ decreases. This indicates that the Scheme I, as expected, is first-order and second-order accurate in time and spatial discretization, respectively. Notice that the mesh ratio here is chosen for the purpose of accuracy test, not for stability or positivity.

Next, we test numerical accuracy of the Scheme II with a mesh ratio  $\Delta t=h/10$. As displayed in Figure~\ref{f:error1} (b), the numerical error decreases with a convergence order around 2, indicating that Scheme II \reff{DisPNP2} is second-order in both time and spatial discretization.

\subsection{Performance Tests}
Numerical analysis demonstrates that the proposed numerical schemes respect mass conservation, energy dissipation, and positivity at discrete level. We conduct numerical tests to assess their performance in preserving such properties. Numerical simulations are performed on a 2D domain $\Omega=[0,1]^2$ with dimensionless parameters $\kappa=0.001$, $\gamma_1=\gamma_2=1$, $a_0=0.3$, $a_1=0.1$, $a_2=0.2$, and $\chi=10$.
We consider uniform initial conditions $c^1(0,x,y)=0.1,~ c^2(0,x,y)=0.1$, and prescribe zero-flux boundary conditions~\reff{0flux} for ionic concentrations and the following boundary conditions for electrostatic potential:
\[
\left\{
\begin{aligned}
&\psi(t,0,y)=0,~\psi(t,1,y)=1, \quad &&y\in[0,1],\\
&\frac{\partial\psi}{\partial y}(t,x,0)=0,~\frac{\partial\psi}{\partial y}(t,x,1)=0,~~&&x\in [0,1],
\end{aligned}
\right.
\]
which describes a horizontally applied potential difference, and zero surface charges on upper and lower boundaries. 
We consider the dielectric coefficient profile~\reff{Eps1} and fixed charge density 
\begin{equation}\label{fixedcharge}
\begin{aligned}
\rho^f(x,y)=&e^{-100[(x-\frac{1}{4})^2+(y-\frac{1}{4})^2]}-e^{-100[(x-\frac{3}{4})^2+(y-\frac{1}{4})^2]}\\
&+e^{-100[(x-\frac{1}{4})^2+(y-\frac{3}{4})^2]}-e^{-100[(x-\frac{3}{4})^2+(y-\frac{3}{4})^2]}.
\end{aligned}
\end{equation}

\begin{figure}[H]
\centering
\includegraphics[scale=.4]{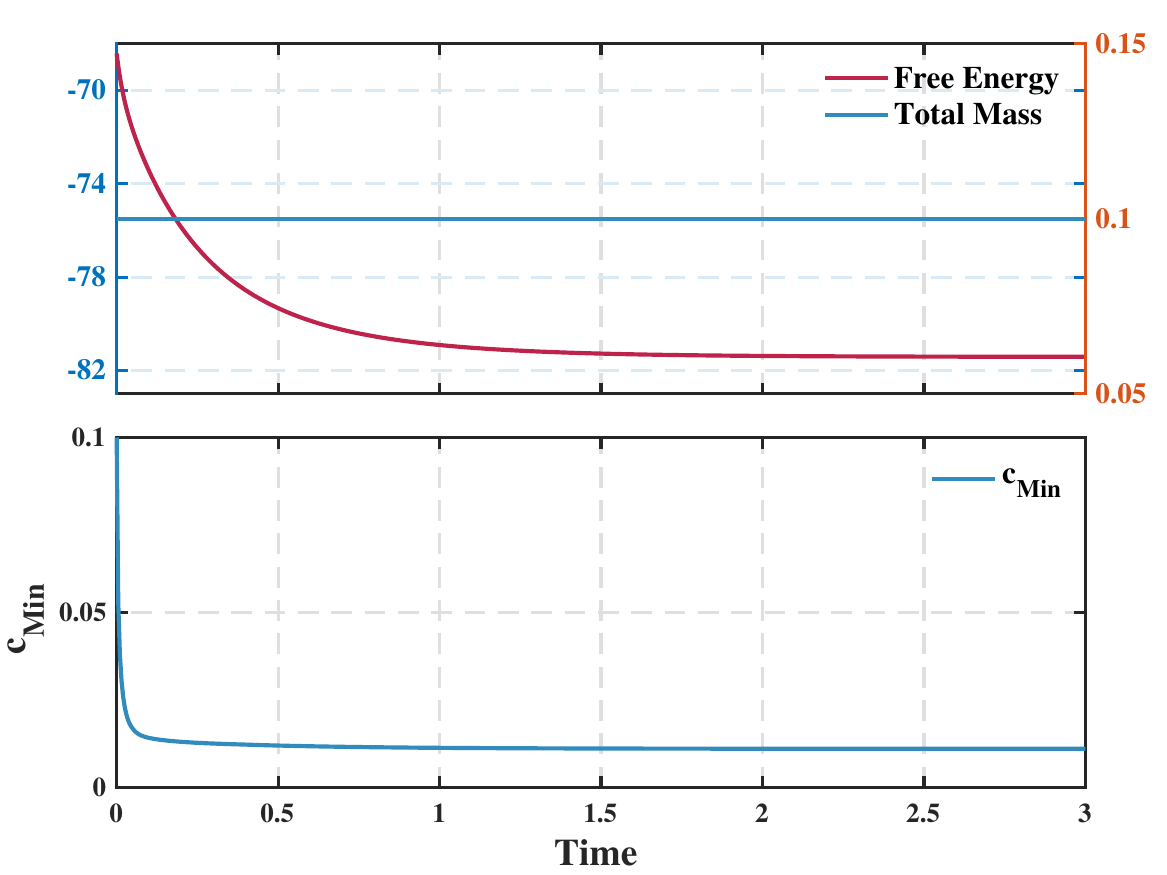}
\caption{Evolution of the discrete energy $F_h$, total mass of cations, and the minimum concentration.}
\label{f:MassEnergy}
\end{figure}
With zero-flux boundary conditions and time-independent boundary potentials, the system possesses properties of mass conservation and free-energy dissipation. Figure~\ref{f:MassEnergy} describes the evolution of discrete energy $F_h$, total mass of cations, and the minimum concentration $c_{\rm Min}:={\rm Min}_{\caol=1,2} {\rm Min}_i\, c^\caol_{i}$, using the Scheme I with $h=1/10$ and $\Delta t=1/50$. From the upper panel,  we observe that the free energy \reff{Fh} monotonically decreases and total mass remains constant as time evolves. The lower panel of Figure~\ref{f:MassEnergy} demonstrates that the minimum concentrations of $c^1$ and $c^2$ on mesh maintain positive, indicating that the developed numerical schemes preserve positivity at discrete level. Such numerical results confirm our analysis.

\subsection{Application: 3D Nanopore}
\begin{figure}[htbp]
\centering
\includegraphics[scale=.48]{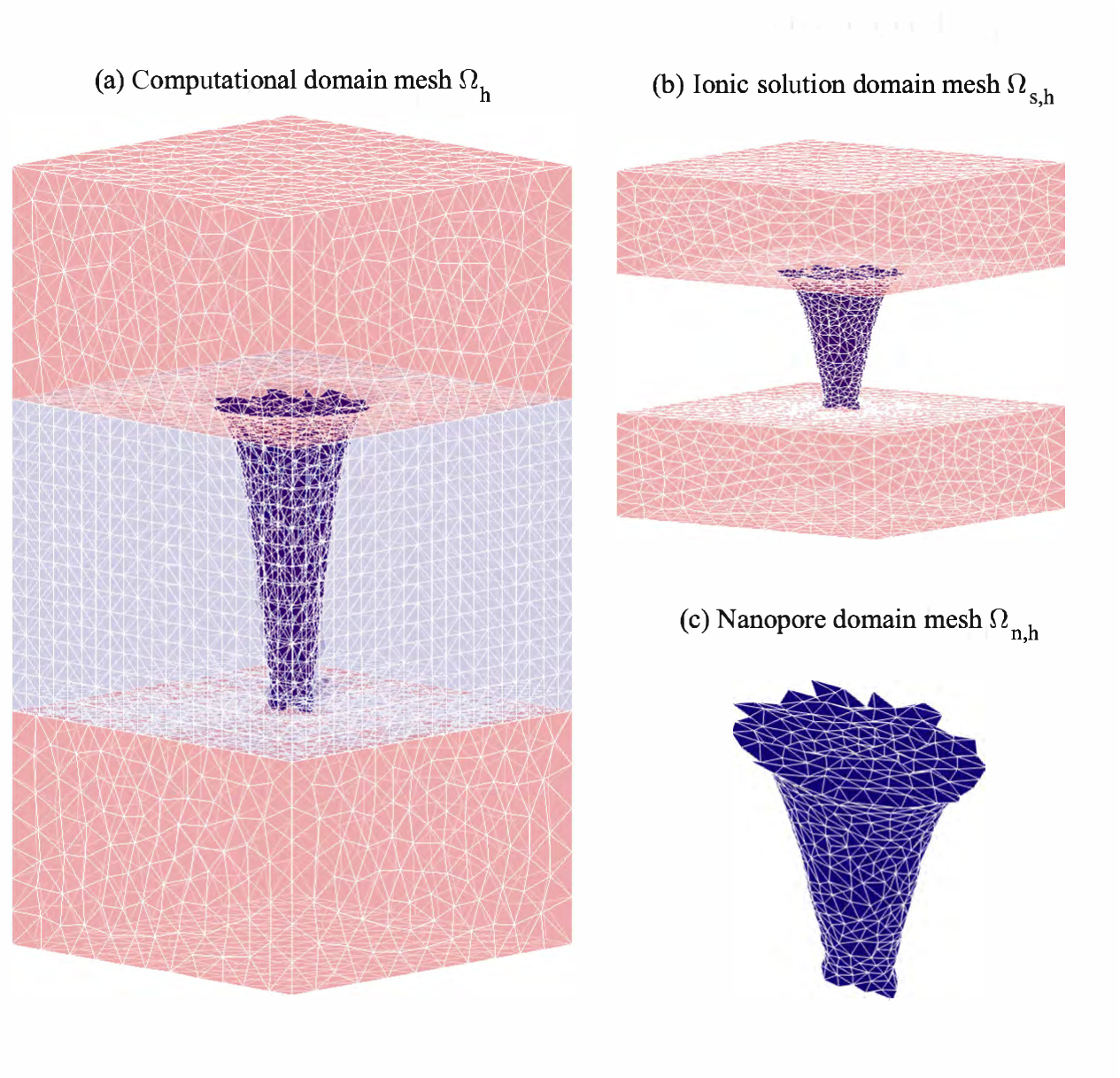}
\caption{(a) A 3D nanopore system occupying $\Omega=[0,1]\times[0,1]\times[0,2]$, triangulated by a tetrahedral mesh $\Omega_{h}$;  (b) ionic solution domain mesh $\Omega_{s, h}$; (c) the nanopore domain mesh $\Omega_{n, h}$.}
\label{f:nanopore}
\end{figure}
In this section, the proposed numerical schemes are applied to explore the ionic transport through the nanopore as shown in Figure~\ref{f:nanopore}. The computational domain $\Omega=[0,1]\times[0,1]\times[0,2]$ consists of two disjoint domains:  solute domain $\Omega_m$ and ionic solution domian $\Omega_s$, which is shown in Figure~\ref{f:nanopore} (b).  The boundary of $\Omega$ is divided into two disjoint parts: $\Gamma_N$ and $\Gamma_D$, where 
\[
\Gamma_D=\left\{(x,y,z)| z=0~ \mbox{or}~ 2,~(x,y)\in [0,1]^2 \right\}.
\]
The boundary of the ionic solution domain $\Omega_s$ consists of two disjoint parts: $\Gamma_D$ and $\Gamma_{zf}$, where $\Gamma_{zf}:=\partial \Omega_s \setminus \Gamma_D$. 

To mimic ion transport between two electrolyte reservoirs through a connecting nanopore under an applied voltage, we consider an ionic solution with $3$ species of ions and set the initial conditions as 
\[
c^1(0,x,y,z)=0.1,~ c^2(0,x,y,z)=0.1,~ c^3(0,x,y,z)=0.2, ~(x,y,z)\in \Omega_s,
\]
and boundary conditions as
\begin{equation}\label{InitBouCons2}
\left\{
\begin{aligned}
&\psi(t,x,y,0)=0,~\psi(t,x,y,2)=5,\\
&c^1(t,x,y,0)=0.1,~c^1(t,x,y,2)=0.1,\\
&c^2(t,x,y,0)=0.1,~c^2(t,x,y,2)=0.1,\\
&c^3(t,x,y,0)=0.2,~c^3(t,x,y,2)=0.2,\\
&\nabla\psi\cdot\bn=0~~ \text{on } \Gamma_{N},\\
\end{aligned}
\right.
\end{equation}
and zero-flux boundary conditions for ion concentrations on $\Gamma_{zf}$.
Numerical simulations take $\kappa=0.001$, $z^1=1$, $z^2=1$, $z^3=-1$, $\rho^f=0$, $a_1=0.4$, $a_2=0.1$, $a_3=0.3$, and $\chi=5$. In the solute region $\Omega_m$, the dielectric coefficient is set as $\ve=2$. In the ionic solution region $\Omega_s$, a $z$-dependent dielectric coefficient is taken as
\begin{equation}\label{asy_ep}
\ve_s(z)=\left\{
\begin{aligned}
&78\left( \frac{5}{39}+\frac{34/39}{1+e^{-15|z-0.62|+7.5}}\right),~~&&z\in[0.7,2],\\
&78\left( \frac{5}{39}+\frac{34/39}{1+e^{-200|z-0.92|+60}}\right),~~&&z\in[0,0.7].\\
\end{aligned}
\right.
\end{equation}
See the dielectric profile shown in the Figure~\ref{f:epsilon}.
\begin{figure}[H]
\centering
\includegraphics[scale=.4]{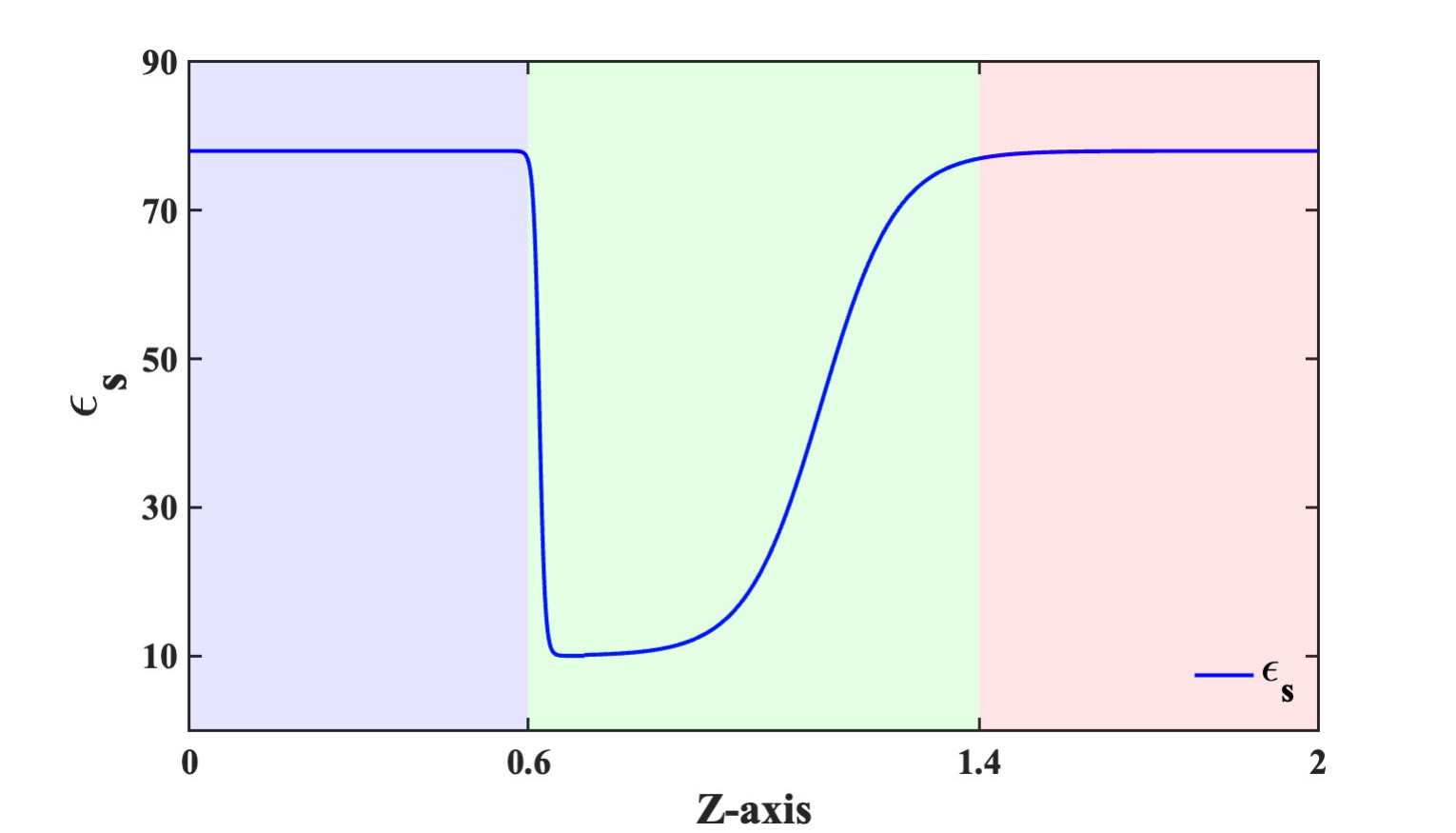}
\caption{The profile of a $z$-dependent, asymmetric dielectric coefficient $\ve_s(z)$.
 }
\label{f:epsilon}
\end{figure}

\begin{figure}[H]
\centering
\includegraphics[scale=.38]{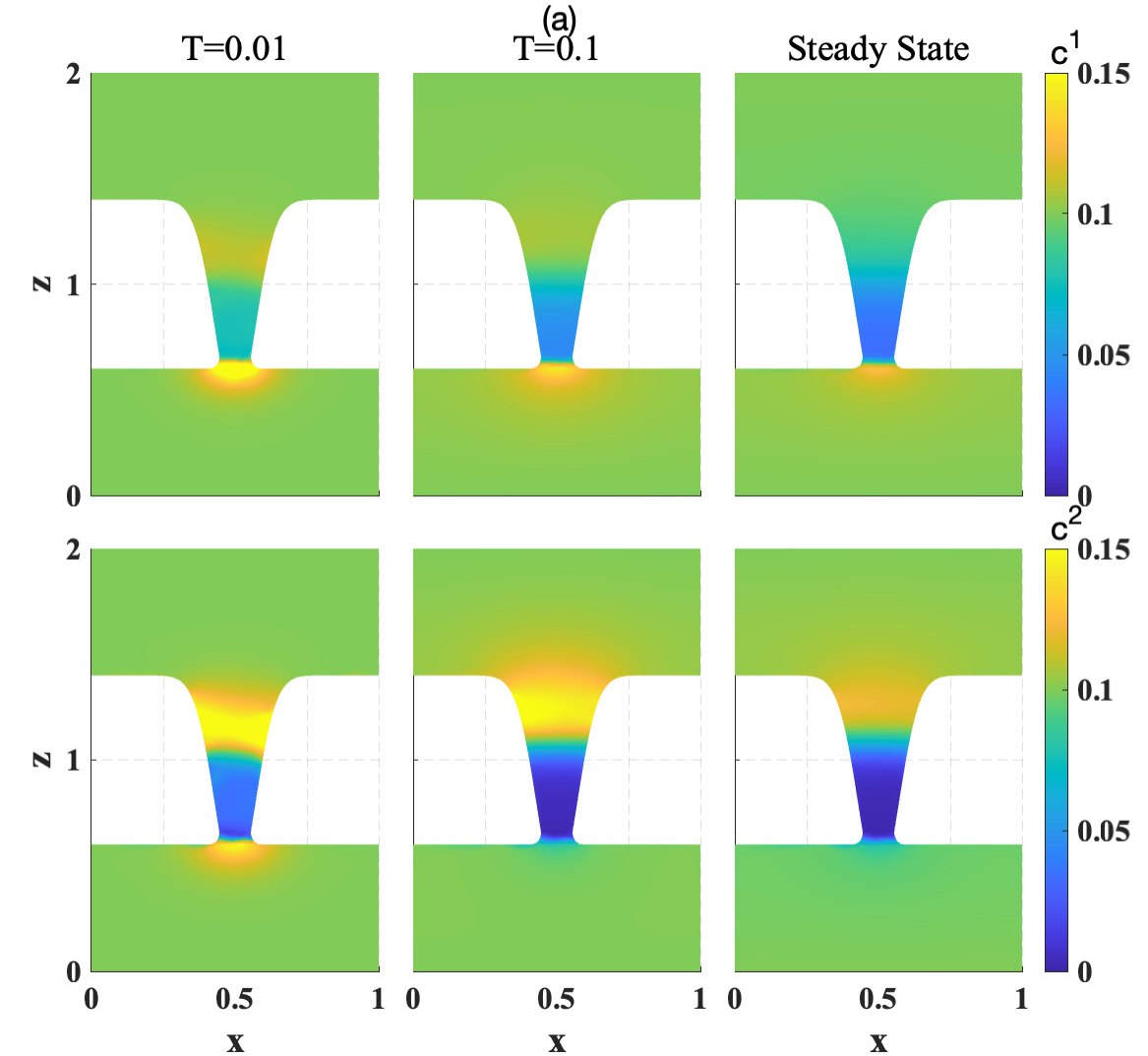}
\includegraphics[scale=.38]{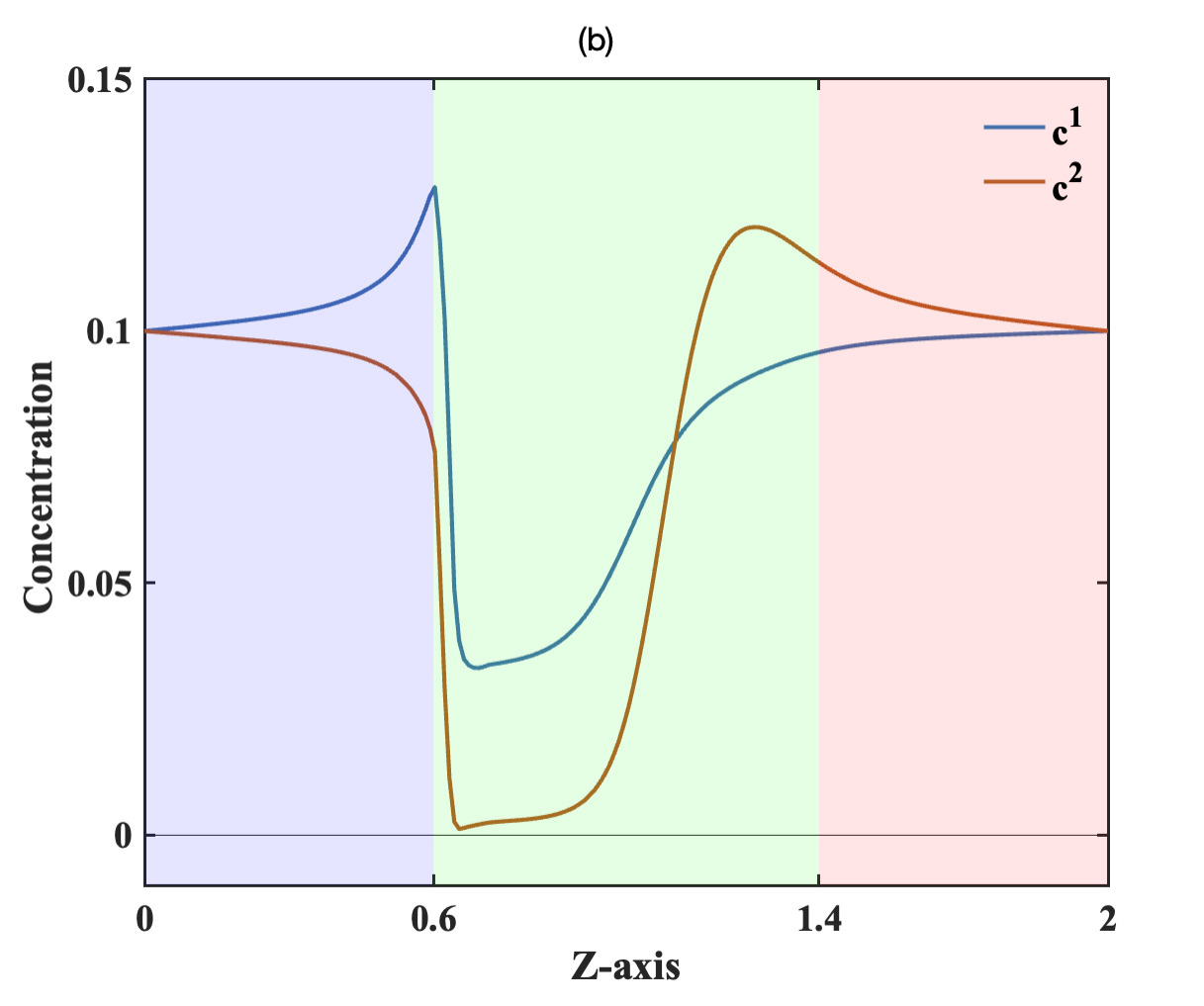}
\caption{(a) Evolution of cation concentrations $c^1$ and $c^2$ under an applied voltage across  a nanopore; (b) Cations concentration $c^1$ and $c^2$ along the central axis at the steady state.}
\label{f:se1}
\end{figure}
Figure~\ref{f:se1} displays the evolution of cation concentrations $c^1$ and $c^2$ towards their steady states under a suddenly applied voltage on the cross section $y=\frac{1}{2}$ of the solvent domain $\Omega_h$. From the plots, one can observe that the distribution of cations results from subtle competition among electrostatic interactions, dielectric depletion due to the inhomogeneous dielectric environment, and steric effects. In the initial stage, the cations quickly get depleted from the  regions with low dielectric coefficient by the Born solvation energy, which is inversely proportional to the ionic size $a_i$. Therefore, the cation concentration $c^2$ with a smaller ionic size undergoes stronger depletion effect than $c^1$. As time evolves, the electrostatic interaction drives cations to permeate through the narrow nanopore, gradually reducing the peaks formed by the dielectric depletion. Detailed comparison of the steady states of $c^1$ and $c^2$ in Figure~\ref{f:se1} (b) reveals that with the same valence, the cation $c^2$ who has stronger depletion effect peaks at the upper entrance of the nanopore with higher electrostatic potential. However, the cation $c^1$ peaks at the lower entrance of the nanopore due to a weaker dielectric depletion effect relative to electrostatic interaction. Complete different distribution patterns of two species of monovalent cations  demonstrate that ionic steric effect and inhomogeneous dielectric effect are crucial to the simulations of ion permeation through narrow nanopores. 

 
Figure~\ref{f:se1} has evidenced asymmetric ionic distribution through the nanopore. To further investigate asymmetry, we probe the rectifying behavior of ion permeation stemming from the asymmetric nanopore structure and dielectric inhomogeneity.
 Ionic current rectification has been studied in numerous experiments on nanopores~\cite{Siwy_JACS04}. Due to asymmetric geometry or surface charge distribution, asymmetric current-voltage ($I$-$V$) curves emerge, even if the electrolyte reserviors in contact are identical at two pore openings \cite{LiuLuPRE_2017}. Rectifying behavior has also been numerically explored in conically shaped nanopores~\cite{XuLuZhang_IEEE18}. 

The developed numerical schemes are now applied to study the rectifying behavior of ion permeation. 
We calculate ionic currents at a cross section of the nanopore in the middle by
\begin{equation}\label{current}
\begin{aligned}
I^\caol=-&\int_{S_m} D^\caol\left\{\partial_z c^\caol+ \beta z^\caol c^\caol \partial_z\psi+\frac{a^3_\caol c^\caol( a^3_1\partial_z c^1+a^3_2\partial_z c^2+a^3_3\partial_z c^3)}{a^3_0(1-a^3_1 c^1-a^3_2 c^2-a^3_3 c^3)}\right.\\
&\qquad\qquad\left.+\frac{\chi}{a_\caol}c^\caol\partial_z\bigg(\frac{1}{\ve}-1\bigg)\right\}\bigg|_{z=1}dxdy\qquad\mbox{for}~~\caol=1,2,3,
\end{aligned}
\end{equation}
where $S_m$ is a cross section at $z=1$. 

\begin{figure}[htbp]
\centering
\includegraphics[scale=.4]{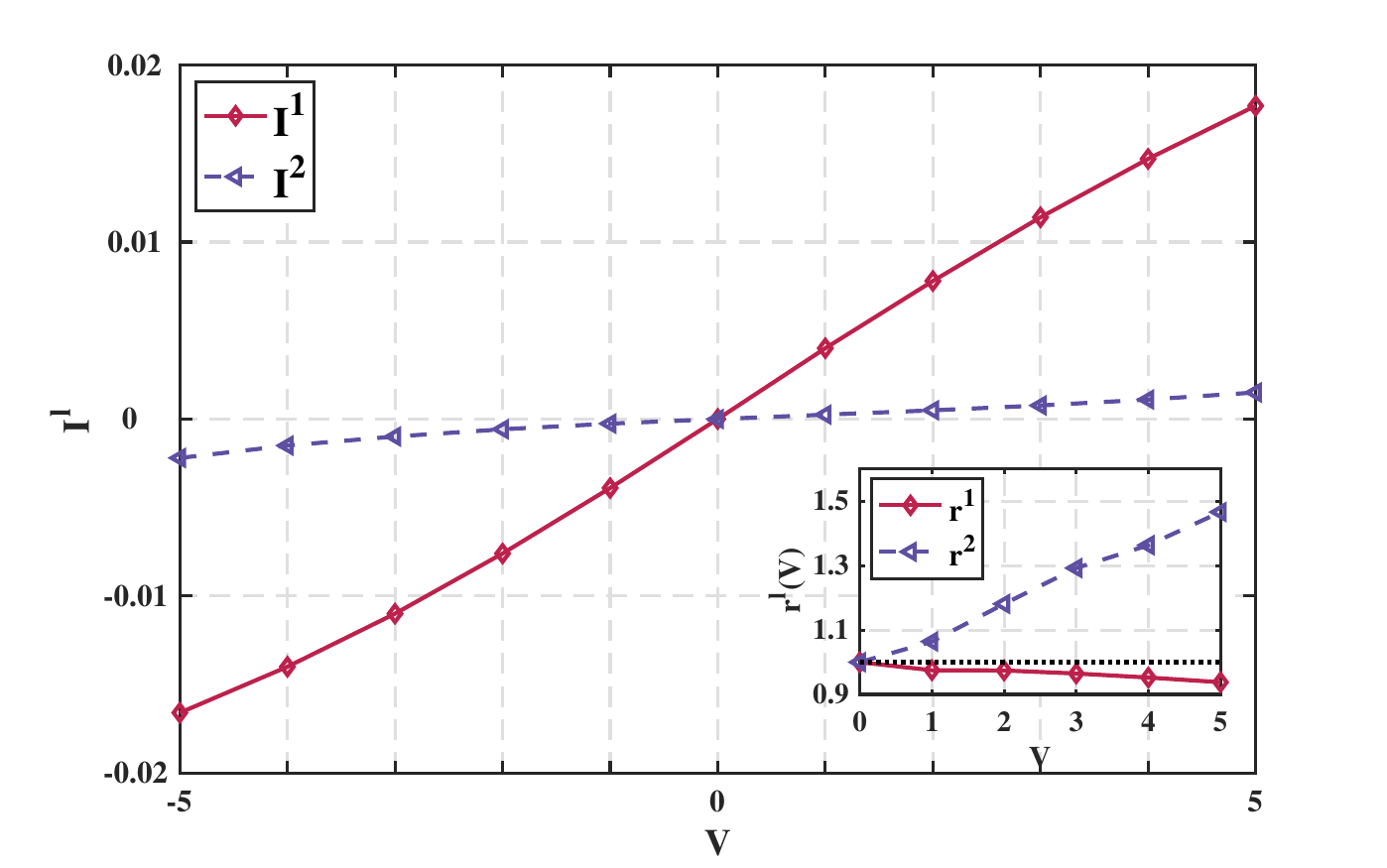}
\caption{$I$-$V$ curves for the permeation of monovalent cations $c^1$ and $c^2$. The inset shows the rectification ratio $r^\caol(V)$.}
\label{f:rect}
\end{figure}
Figure~\ref{f:rect} presents the $I$-$V$ curves for monovalent cations $c^1$ and $c^2$. With the same ionic valence and applied voltage, the current $I^1$ is one order of magnitude larger than that of $I^2$, indicating ion selection of the first species over the second one by the nanopore. This can be explained by the Born solvation energy term, which is inversely proportional to the ion radius. With a larger ionic radius, the depletion effect due to the Born solvation energy is relatively weaker, resulting in larger permeation current. This can also be understood from the perspective of molecular interactions--it costs a larger solvation energy penalty to peel off water molecules in the hydration shell of a smaller ion, before entering the narrow nanopore. In addition, both the $I$-$V$ curves are asymmetric with respect to the origin and exhibit pronounced rectifying behaviors. To quantify the rectification, we plot in the inset of Figure~\ref{f:rect} the rectification ratio
\[
r^\caol(V):=\bigg| \frac{I^\caol(-V)}{I^\caol(V)}\bigg|,~~\caol=1,2.
\]
Obviously, both ratios for $c^1$ and $c^2$ deviate from $1$. Although $c^2$ has lower ion conductance,  its rectifying behavior is much stronger. This ascribes to the facts that, with the same nanopore geometry for $c^1$ and $c^2$, the asymmetric dielectric coefficient mainly accounts for rectification and the inhomogeneous dielectric effect (the Born solvation energy term) gets enhanced with a smaller ionic size. Our numerical results demonstrate that the modified PNP model, equipped with the proposed schemes, has promising applications in the study of ion selection and rectification.   

\section{Conclusions}
PNP-type models have been widely applied to simulate semiconductors, electrochemical energy devices, and biological systems. Numerous first-order schemes that can ensure numerical positivity, energy dissipation, and mass conservation have been proposed for PNP-type models in literature. However, second-order schemes that can ensure such properties are still in lack. To the best of our knowledge, the modified second-order Crank-Nicolson scheme of a quotient form proposed in~\cite{LiuWang_Sub21} is the only one available in literature. However, the quotient term becomes problematic as a system approaches steady states and cannot be used to establish numerical positivity.  

This work has developed novel second-order discretization in time and finite volume discretization in space for modified PNP equations with effects arising from ionic steric interactions and dielectric inhomogeneity. A multislope method on unstructured meshes has been proposed to reconstruct positive, accurate approximations of mobilities on faces of control volumes. Numerical analysis has established unique existence of a positive solution to the nonlinear schemes, unconditional dissipation of the original discrete energy, and preservation of steady states.  Our developed second-order discretization has overcome the main limitations of the one proposed in~\cite{LiuWang_Sub21}. Ample numerical results have confirmed that the schemes have expected numerical accuracy and can preserve properties robustly.  Applications to  ion permeation through a nanopore has demonstrated that the developed schemes can be a promising tool in the investigation of ion selection and rectification.

It is worth noting that the proposed second-order discretization in time can be extended to design second-order, (original) energy dissipative schemes for a large type of gradient flow problems with entropy of the $c \log c$ form, including the Flory--Huggins potential. Such development will be pursued in our future work. 

\vskip 5mm
\noindent{\bf Acknowledgements.}
This work is supported in part by the National Natural Science Foundation of China 12101264, Natural Science Foundation of Jiangsu Province BK20210443, High level personnel project of Jiangsu Province 1142024031211190 (J. Ding), and National Natural Science Foundation of China 12171319 and Shanghai Science and Technology Commission 21JC1403700 (S. Zhou).

\bibliographystyle{plain}
\bibliography{PNP}

\end{document}